\declaretheoremstyle[
headfont=\color{blue}\normalfont\bfseries,
bodyfont=\color{blue}\normalfont\itshape,
]{colored}
\theoremstyle{plain}
\newtheorem{theorem}{Theorem}[section]
\newtheorem{prop}[theorem]{Proposition}
\newtheorem{lemma}[theorem]{Lemma}
\theoremstyle{definition}
\newtheorem{remark}[theorem]{Remark}
\newtheorem{definition}[theorem]{Definition}
\newcommand{\vertiii}[1]{{\left\vert\kern-0.25ex\left\vert\kern-0.25ex\left\vert #1
 \right\vert\kern-0.25ex\right\vert\kern-0.25ex\right\vert}}
\newcommand{\R}{\mathbb{R}}
\newcommand{\N}{\mathbb{N}}
\newcommand{\C}{\mathbb{C}}
\newcommand{\D}{\mathbb{D}}
\newcommand{\A}{\mathcal{A}}
\newcommand{\M}{\mathcal{M}}
\newcommand{\eps}{\varepsilon}
\newcommand{\Ho}{\mathcal{H}}
\newcommand{\Au}{\mathcal{A}_u }
\renewcommand{\leq}{\leqslant}
\renewcommand{\geq}{\geqslant}
\title[Algebras of holomorphic functions on polydisk type domains]{The spectra of Banach algebras of holomorphic functions on polydisk type domains}
\author[Y. S. Choi]{Yun Sung Choi}
\address[Yun Sung Choi]{Department of Mathematics, POSTECH, Pohang 790-784, Republic of Korea}
\email{\texttt{mathchoi@postech.ac.kr}}
\author[M. Jung]{Mingu Jung}
\address[Mingu Jung]{Department of Mathematics, POSTECH, Pohang 790-784, Republic of Korea \newline
	\href{http://orcid.org/0000-0003-2240-2855}{ORCID: \texttt{0000-0003-2240-2855} }}
\email{\texttt{jmingoo@postech.ac.kr}}
\author[M. Maestre]{Manuel Maestre}
\address[Manuel Maestre]{Departamento de An\'{a}lisis Matem\'{a}tico,
	Universidad de Valencia, Doctor Moliner 50, 46100 Burjasot
	(Valencia), Spain}
\email{manuel.maestre@uv.es}
\thanks{The first and second author were supported by the Basic Science Research Program through the National Research Foundation of Korea (NRF) funded by the Ministry of Education (NRF-2019R1A2C1003857). The third author was supported by MINECO and FEDER Project MTM2017-83262-C2-1-P, and also supported by Prometeo PROMETEO/2017/102.}
\date{\today}
\keywords{Cluster value problem; Algebras of holomorphic functions; Spectrum; fiber; Banach spaces}
\subjclass[2010]{Primary 46J15; Secondary 46E50, 46G20}
\begin{document}
	
	\begin{abstract}
R.M. Aron et al. \cite{ACGLM} proved that the Cluster Value Theorem in the infinite dimensional Banach space setting holds for the Banach algebra $\Ho^\infty (B_{c_0})$. On the other hand, B.J. Cole and T.W. Gamelin \cite{CG} showed that $\Ho^\infty (\ell_2 \cap B_{c_0})$ is isometrically isomorphic to $\Ho^\infty (B_{c_0})$ in the sense of an algebra. Motivated by this work, we are interested in a class of open subsets $U$ of a Banach space $X$ for which $\Ho^\infty (U)$ is isometrically isomorphic to $\Ho^\infty (B_{c_0})$.
We prove that there exist polydisk type domains $U$ of any infinite dimensional Banach space $X$ with a Schauder basis such that $\Ho^\infty (U)$ is isometrically isomorphic to $\Ho^\infty (B_{c_0})$, which generalizes the result by Cole and Gamelin \cite{CG}. Furthermore, we study the analytic and algebraic structure of the spectrum of $\Ho^\infty (U)$ and show that the Cluster Value Theorem is true for $\Ho^\infty (U)$.
	\end{abstract}
	
	\maketitle
	
	\section{Introduction}

In 1961, I.J. Schark \cite{S} proved the following result.
\begin{theorem}
For each $f \in \Ho^\infty (\D)$ and $z \in \overline{\D}$, the image of the fiber $\M_z (\Ho^\infty (\D ))$ under the Gelfand transform $\widehat{f}$ of $f$ coincides with the set of all limits of $(f(x_{\alpha}))$ for the nets $(x_{\alpha})\subset D$ converging to $z$.
\end{theorem}
All the definitions and unexplained notations can be found in Section \ref{Background}. This result has been called the Cluster Value Theorem. It is also known as the weak version of the Corona Theorem, which was proved by L. Carleson \cite{C}. In the infinite dimensional Banach space setting, R.M. Aron et al. \cite{ACGLM} presented in 2012 the first positive results for the Cluster Value Theorem by proving that the theorem is true for the Banach algebra $\Ho^\infty (B_{c_0})$.
Though they showed that the Cluster Value Theorem is also true for the closed subalgebra $\mathcal{A}_u (B_{\ell_2})$ of $\Ho^\infty (B_{\ell_2})$, it is still open if this theorem holds for $\Ho^\infty (B_{\ell_2})$. Surprisingly, a domain $U$ of a Banach space has not been found for which the Cluster Value Theorem does not hold.

Three years later, W.B. Johnson and S.O. Castillo \cite{JC} proved that the Cluster Value Theorem holds also for $C(K)$-spaces, when $K$ is a scattered compact Hausdorff space by using similar arguments as in \cite[Theorem 5.1]{ACGLM}. However, up to our knowledge, $c_0$ and $C(K)$ with $K$ a compact Hausdorff scattered, are the only known Banach spaces $X$ for which the Cluster Value Theorem holds for $\Ho^\infty (B_X)$. For historical background and recent developments on this topic, we refer to the survey article \cite{CGMP}.

On the other hand, we got interested in more general open domains $U$ of a Banach space $X$ than its open unit ball $B_X$ such that the Cluster Value Theorem holds for $\Ho^\infty (U)$. It was observed by B.J. Cole and T.W. Gamelin \cite{CG} that the Banach algebra $\Ho^\infty (\ell_2 \cap B_{c_0})$ is isometrically isomorphic to $\Ho^\infty (B_{c_0})$. Motivated by this work, we first generalize the open domain $\ell_2\cap B_{c_0}$ to a polydisk type domains $U$ of a Banach space $X$ with a Schauder basis for which the Banach algebra $\Ho^\infty (U)$ is isometrically isomorphic to the Banach algebra $\Ho^\infty (B_{c_0})$. Once this is established, we can derive the analytic and algebraic structure of the spectrum $\M (\Ho^\infty (U))$ from those of $\M (\Ho^\infty (B_{c_0}))$ with the aid of the transpose of the aforementioned isometric isomorphism.
For instance, by applying the Cluster Value Theorem for $\Ho^\infty (B_{c_0})$ (see \cite[Theorem 5.1]{ACGLM}), we would find a new open domain $U$ for which the Cluster Value Theorem holds for $\Ho^\infty (U)$. We will introduce some polydisk type domains of a Banach space $X$ with a Schauder basis, which will be denoted by $\D_X (\bm{r})$, where $\bm{r}$ is a sequence of positive real numbers (see Definition \ref{def:rD_X}). As a matter of fact, the domain $\ell_2 \cap B_{c_0}$ considered in \cite{CG} turns to be a special case of $\D_X (\bm{r})$.

Let us describe the contents of the paper: Section \ref{Background} is devoted to basic materials and the definition of certain polydisk type domains $\D_X (\bm{r})$, which are of our interest.
In Section \ref{sec:isometric_isomorphism}, we prove that the Banach algebra $\Ho^\infty (\D_X (\bm{r}))$ of all bounded holomorphic functions on $\D_X (\bm{r})$ (endowed with the supremum norm on $\D_X (\bm{r})$) is isometrically isomorphic to $\Ho^\infty (B_{c_0})$ as algebras under some assumption on $\bm{r}$, which generalizes the result in \cite{CG}.
As all the algebras $\Ho^\infty (\D_X (\bm{r}))$ are isometrically isomorphic, without loss of generality, we may fix the sequence $\bm{r}$ to be $(1,1,\ldots)$ and denote the corresponding domain by $\D_X$ in the following sections.
As we commented above, in Section \ref{sec:fiber}, we study the structure of the spectrum $\M (\Ho^\infty (\D_X))$ by applying that of the spectrum $\M (\Ho^\infty (B_{c_0}))$ through the transpose of the above isometric isomorphism. Moreover, we consider the notion of a cluster set of elements in $ \Ho^\infty (\D_X)$, and examine its relation with that in $\Ho^\infty (B_{c_0})$. Using this relation, we prove that the corresponding Cluster Value Theorem holds for $\Ho^\infty (\D_X)$.

\section{Background}\label{Background}

In this section, we provide some necessary notations. Throughout this paper, $X$ will be an infinite dimensional complex Banach space with open unit ball $B_X$. The algebras that we are going to consider are denoted by $\Ho^\infty (U)$ and $\mathcal{A}_u (U)$, the algebra of all bounded holomorphic functions on an open subset $U$ of $X$ and the algebra of all uniformly continuous holomorphic functions on $U$, respectively. Note that $\Ho^\infty (U)$ becomes a Banach algebra when it is endowed with the supremum norm on $U$ and so does $\mathcal{A}_u (U)$ whenever $U$ is convex and bounded. For a Banach algebra $\mathcal{A}$, we will denote by $\mathcal{M} (\mathcal{A})$ the spectrum (maximal ideal space) of $\mathcal{A}$, that is, the space of all continuous homomorphisms on $\mathcal{A}$. Notice that the spectrum $\M (\A)$ is always a compact set endowed with the restriction of the weak-star topology. For each $a \in \mathcal{A}$, the Gelfand transform of $a$ will be denoted by $\widehat{a}$ that sends $\phi \in \mathcal{M} (\mathcal{A})$ to $\phi (a) \in \C$ for every $\phi \in \mathcal{M} (\mathcal{A})$.

If a Banach algebra $\mathcal{A}$ is taken to be $\Ho^\infty (B_X)$ or $\A_u (B_X)$, then there is the natural restriction mapping $\pi$ on $\M (\A )$ to $X^{**}$ given as $\pi (\phi) = \phi \vert_{X^*}$ because the dual space $X^*$ is included in $\A$ in this case. By Goldstine's theorem, we have that $\pi (\M (\A )) = \overline{B}_{X^{**}}$ and the \emph{fiber of $\M (\A)$ at $z \in \overline{B}_{X^{**}}$} can be well defined as the pre-image of $z$ under the mapping $\pi$, i.e.,
\[
\M_z (\A) = \{ \phi \in \A : \pi (\phi) = z \} = \pi^{-1} (z).
\]
We say that the \emph{Cluster Value Theorem holds for $\mathcal{A}$ at $z \in \overline{B}_{X^{**}}$} when the image of the fiber $\M_z (\mathcal{A})$ under the Gelfand transform $\widehat{f}$ coincides with the \emph{cluster sets $Cl (f, z)$} defined by
\[
\{ \lambda \in \C: \, \text{there exists a net} \,\, (x_{\alpha}) \subset B_X \,\, \text{such that} \,\, x_{\alpha} \xrightarrow{w^*} z \,\, \text{and} \,\, f(x_\alpha) \rightarrow \lambda \},
\]
for every $f \in \mathcal{A}$. If $\widehat{f} (\M_z (\mathcal{A}))$ coincides with $Cl (f,z)$ for every $z \in \overline{B}_{X^{**}}$ and every $f \in \mathcal{A}$, then we simply say that the \emph{Cluster Value Theorem holds for $\mathcal{A}$}.
It is proved in \cite{ACGLM} that the Cluster Value Theorem holds for $\Ho^\infty (B_{c_0})$ and for $\mathcal{A}_u (B_{\ell_2})$, and that the Cluster Value Theorem holds for $\mathcal{A}_u (B_X)$ at $0$ whenever $X$ is a Banach space with a shrinking $1$-unconditional basis.

However, unlike the situation when we dealt with $\Ho^\infty (B_X)$ or with $\A_u (B_X)$, the dual space $X^*$ is not included in $\Ho^\infty (U)$ in general for an open subset $U$ of a Banach space (see Proposition \ref{Au(D_X)_not_contained_in_Ho(D_X)}).
Thus, `fibers' of $\M (\Ho^\infty (U))$ cannot be defined in the same manner as before. For this reason, we consider the subspace $X^\sharp$ of $\Ho^\infty (U)$ defined as
\begin{equation}\label{X_sharp_U}
X^\sharp = \left(\Ho^\infty (U) \cap X^*, \| \cdot \|_U \right),
\end{equation}
where $\| \cdot \|_U$ is the supremum norm on $U$. In other words, $X^\sharp$ is the space which consists of elements of $\Ho^\infty (U)$ which are bounded linear functionals on $X$.
Notice that $X^\sharp$ can possibly be the trivial set $\{ 0 \}$. Under the case when $X^\sharp \neq \{ 0 \}$, we can consider the restriction mapping $\pi$ from the spectrum $\M(\Ho^\infty (U))$ to the closed unit ball $\overline{B}_{(X^\sharp)^*}$ of the dual space $(X^\sharp)^*$. Now, we consider the following definition.

\begin{definition}\label{def:fibers_U}
Let $X$ be a Banach space and $U$ be an open subset of $X$. If the restriction mapping $\pi : \M(\Ho^\infty (U)) \rightarrow \overline{B}_{(X^\sharp)^*}$ is surjective, then the \emph{fiber of $\M (\Ho^\infty (U))$ at $z \in \overline{B}_{(X^\sharp)^*}$} is defined as $\{ \phi \in \M (\Ho^\infty (U)) : \pi (\phi) = z \}$.
\end{definition}

As the above definition might be too general for our purpose, we would like to focus on certain polydisk type domains instead of dealing with an arbitrary open subset of a Banach space. Let us note that the following domain can be seen as a generalization of the domains $B_{c_0}$ (when $X = c_0$) or $\ell_p \cap \D^\N$, $1\leq p < \infty$ (when $X = \ell_p$).

\begin{definition}\label{def:rD_X}
Suppose that $X$ is a Banach space with a normalized Schauder basis and let $\bm{r} = (r_n)_{n =1}^\infty$ be a sequence of positive real numbers. Let us consider the set $\D_X (\bm{r})$ given by
\begin{equation*}
\D_X (\bm{r})= \left\{ x \in X : x = \sum_{j=1}^{\infty} e_j^* (x) e_j \,\, \text{with} \,\, |e_j^*(x)| < r_j \,\, \text{for every} \,\, j \in \N \right\}.
\end{equation*}
In particular, when $\bm{r} = (1, 1, \ldots)$, we simply denote it by $\D_X$.
\end{definition}

Let us make some remarks on $\D_X (\bm{r})$. If $\inf\limits_{n\in\N} r_n = 0$, then $\D_X (\bm{r})$ is never open in $X$. For if it were, it would follow that $t B_X \subset \D_X (\bm{r})$ for some $t > 0$. For $N \in \N$ satisfying that $r_N < \frac{t}{2}$, we have that $\frac{t}{2} e_N \in t B_X \subset \D_X (\bm{r})$. This implies that $\frac{t}{2} < r_N < \frac{t}{2}$, a contradiction.
As a matter of fact, we have the following result. Recall that for a Banach space with a normalized Schauder basis $(e_j)_{j=1}^{\infty}$ and coefficient functionals $(e_j)_{j=1}^{\infty}$, we always have $\sup_{j \in \N} \|e_j^* \| < \infty$.

\begin{remark}
The set $\D_X (\bm{r})$ is an open subset of $X$ if and only if $\inf\limits_{n\in \N} r_n > 0$.
\end{remark}

\begin{proof}
We only need to prove if $\inf\limits_{n\in\N} r_n > 0$ then $\D_X (\bm{r})$ is open. To this end, put $\alpha = \inf\limits_{n\in\N} r_n > 0$ and fix $x \in \D_X (\bm{r})$. Let $N \in \N$ be such that $|e_j^* (x)| < \frac{\alpha}{2}$ for all $j \geq N$. Choose $\eps > 0$ sufficiently small so that $\eps < \frac{\alpha}{2}$ and $\eps < \min\{ r_j - |e_j^* (x)| : 1 \leq j \leq N-1\}$. If $y \in X$ satisfies $\| x - y \| < (\sup\limits_{j \in \N} \|e_j^*\| )^{-1} \eps$, then $|e_j^* (y)| < r_j$ for every $j \in \N$.
\end{proof}

\begin{remark}
	M. I. Kadec in 1967 proved that given $X$ an infinite dimensional real or complex separable Banach space, $X$ and its open unit ball $B_X$ are isomorphic to $\mathbb{R}^{\mathbb{N}}$ the topological product of a countable number of copies of $\mathbb{R}$  (see e.g. \cite[Theorem 5.1, p. 188]{BP}). An obvious consequence is that if $X$ and $Y$ are infinite dimensional real or complex separable Banach space, then $B_X$ and $B_Y$ are homeomorphic. In this sense, the following natural question is asked by the referee: Given $X$ and $Y$ two \emph{infinite dimensional} Banach spaces with Schauder bases, and given  $\bm{r} = (r_n)_{n =1}^\infty$ and $\bm{s} = (s_n)_{n =1}^\infty$ two sequences of positive real numbers such that $\inf\limits_{n\in\N} r_n > 0$ and $\inf\limits_{n\in\N} s_n > 0$, are $\D_X (\bm{r})$ and $\D_Y (\bm {s})$ homeomorphic? The answer is positive and to give it, we use the following lemma.
\end{remark}

\begin{lemma}\label{lem:homeo}
Let $X$ be a Banach space and $U$ a non empty convex balanced open subset of $X$ which does not contain any non-trivial subspace (i.e., for any $x \neq 0$, there exists $\lambda_x \neq 0$ such that $\lambda_x x \notin U$). Then $U$ is homeomorphic to $X$.
\end{lemma}

\begin{proof}
Let us denote by $p : X \rightarrow \mathbb{R}$, defined by $p(x)=\inf\{\lambda >0: x\in \lambda U\}$. We have that $U=\{x\in X: p(x)<1\}$. By assumption, we have that if $x \neq 0$, then $p(x)>0$. In particular, $p$ is a norm and $|p(x)-p(y)| \leq p(x-y)$ for all $x, y \in X$. Without loss of generality, we may assume that $B_X \subseteq U$. Thus, $|p(x)-p(y)| \leq \|x-y\|$ for all $x, y \in X$ and $p$ is a $\|\cdot\|$-continuous function. As a consequence  the mapping  $h : U \rightarrow X$ given by $h(x) = (1-p(x))^{-1} x$ for every $x \in U$ is $\|\cdot\|$-continuous and since $h^{-1} : X \rightarrow U$ is given by $h^{-1} (z) = (1+ p(z))^{-1}{ z},$ $h^{-1}$ is also continuous, we complete the proof.
\end{proof}

\begin{prop}
Let $X$ and $Y$ be {infinite dimensional} Banach spaces with Schauder bases, and $\bm{r} = (r_n)_{n =1}^\infty$ and $\bm{s} = (s_n)_{n =1}^\infty$ be sequences of positive real numbers such that $\inf\limits_{n\in\N} r_n > 0$ and $\inf\limits_{n\in\N} s_n > 0$. Then $\D_X (\bm{r})$ and $\D_Y (\bm {s})$ are homeomorphic.
\end{prop}

\begin{proof}
Observe from Lemma \ref{lem:homeo} that $X$ and $Y$ are homeomorphic to $\D_X (\bm{r})$ and $\D_Y (\bm{s})$, respectively, and that by the above mentioned Kadec's result, $X$ and $Y$ are homeomorphic.
\end{proof}

Recall that a \emph{complete Reinhardt domain} $R$ of a Banach space $X$ with a normalized unconditional Schauder basis is a subset such that for every $x \in R$ and $y \in X$ with $|e_j^* (y)| \leq |e_j^* (x)|$ for every $j \in \N$, we have $y \in R$.
Notice that if a basis of a Banach space $X$ is unconditional, then $\D_X (\bm{r})$ becomes a complete Reinhardt domain.
For more theory about holomorphic functions on Reinhardt domains of a Banach space, see \cite{CMV, DeGaMaSe, MN} and the references therein. Nevertheless, most of results in this paper hold for a general normalized Schauder basis.

\section{An isometric isomorphism between $\Ho^\infty (\D_X (\pmb{r}))$ and $\Ho^\infty (B_{c_0})$}\label{sec:isometric_isomorphism}

We start with the following main result which can produce many examples of (bounded or unbounded) polydisk type domains $U$ of a Banach space for which $\Ho^\infty (U)$ is isometrically isomorphic to $\Ho^\infty (B_{c_0})$.
Using the standard notation, let us denote by $P_n$ the canonical projection from a Banach space $X$ with Schauder basis $(e_j)_{j=1}^{\infty}$ onto the linear subspace spanned by $\{e_j : 1\leq j \leq n\}$ for each $n\in\mathbb{N}$.

\begin{theorem}\label{Hol:D(X):B_{c_0}} Let $X$ be an infinite dimensional Banach space with a normalized Schauder basis $(e_j)_{j=1}^{\infty}$, and $\bm{r} = (r_n)_{n=1}^\infty$ a sequence of positive real numbers with $\inf\limits_{n\in\N} r_n >0$. Then $\Ho^\infty (\D_X (\bm{r}))$ and $\Ho^\infty (B_{c_0})$ are isometrically isomorphic.
\end{theorem}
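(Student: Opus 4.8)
The plan is to construct the isometric isomorphism by mimicking the Cole--Gamelin argument for $\ell_2 \cap B_{c_0}$. Since all $\Ho^\infty(\D_X(\bm r))$ are intended to be isometrically isomorphic (as the introduction promises), it suffices to treat a convenient normalization, but the cleanest route is to build a bounded holomorphic bijection-like map directly between $\D_X(\bm r)$ and $B_{c_0}$ at the level of function algebras, \emph{not} of the domains themselves (the domains are genuinely different). First I would define $\Phi \colon \D_X(\bm r) \to B_{c_0}$ by $\Phi(x) = \bigl(e_1^*(x)/r_1,\, e_2^*(x)/r_2,\, \ldots\bigr)$. The condition $|e_j^*(x)| < r_j$ for all $j$ says precisely that each coordinate has modulus $< 1$, so the image lies in the open unit ball of $\ell_\infty$; the extra input needed is that $(e_j^*(x)/r_j)_j \in c_0$, which follows because $e_j^*(x) \to 0$ (coordinate functionals of a Schauder basis expansion tend to $0$) while $1/r_j$ is bounded by $1/\inf_n r_n < \infty$. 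One checks $\Phi$ is holomorphic (each coordinate $e_j^*/r_j$ is a bounded linear functional, hence holomorphic, and $\|\Phi(x)\|_\infty < 1$ bounds it), giving a composition homomorphism $C_\Phi \colon \Ho^\infty(B_{c_0}) \to \Ho^\infty(\D_X(\bm r))$, $g \mapsto g \circ \Phi$, which is norm-nonincreasing.

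The key step is to produce an inverse, i.e. a holomorphic map $\Psi \colon B_{c_0} \to \D_X(\bm r)$ with $\Phi \circ \Psi = \mathrm{id}_{B_{c_0}}$, so that $C_\Phi$ and $C_\Psi$ are mutually inverse and each norm-nonincreasing, forcing both to be isometric. The natural candidate is $\Psi(y) = \sum_{j=1}^\infty r_j\, y_j\, e_j$ for $y = (y_j) \in B_{c_0}$. Here $r_j y_j \to 0$ (since $y_j \to 0$ and $r_j$ is bounded above — wait, $r_j$ need only be bounded \emph{below}; so one must also use $\sup_j r_j < \infty$). This is the point where the hypothesis must be examined carefully: $\inf_n r_n > 0$ guarantees $\Phi$ lands in $c_0$, but for $\Psi$ to land in $X$ we need $\sum r_j y_j e_j$ to converge in $X$, and since $(e_j)$ is only a Schauder basis (not unconditional), convergence of $\sum c_j e_j$ is not automatic from $c_j \to 0$. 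The resolution — and I expect this to be the main obstacle — is that $\Psi$ need not be defined on all of $B_{c_0}$ as a map into $X$; instead one works with the \emph{finite-dimensional truncations} $\Psi_n(y) = \sum_{j=1}^n r_j y_j e_j$ and shows that for $g \in \Ho^\infty(B_{c_0})$ the functions $g \circ \Phi \circ Q_n$ (where $Q_n = \Psi_n \circ \Phi$ is a holomorphic retraction of $\D_X(\bm r)$ onto a finite-dimensional slice) converge pointwise on $\D_X(\bm r)$ and define the inverse map, or alternatively one invokes a standard density/Montel-type argument: $\Ho^\infty(B_{c_0})$ is generated in a suitable sense by functions depending on finitely many coordinates, on which $C_\Phi$ is visibly isometric, and this propagates.

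Concretely, the cleanest implementation I would pursue: show (i) $C_\Phi$ is injective, because $\Phi$ has dense range in $B_{c_0}$ in an appropriate topology — indeed $\Phi$ already hits all finitely-supported points of $B_{c_0}$, which separate points of $\Ho^\infty(B_{c_0})$ via the polynomial expansion; (ii) $C_\Phi$ is surjective, by checking that every $g \in \Ho^\infty(B_{c_0})$ pulls back: given $f \in \Ho^\infty(\D_X(\bm r))$, one wants $g$ on $B_{c_0}$ with $g \circ \Phi = f$, and one defines $g$ on the dense set $\Phi(\D_X(\bm r))$ and extends by uniform continuity on $r B_{c_0}$-balls plus a limiting argument exhausting $B_{c_0}$ by the sets $\{\|y\|_\infty \le 1 - 1/k\}$; (iii) since $C_\Phi$ is a surjective algebra homomorphism that is norm-nonincreasing and its inverse is also realized by composition (with the truncations $\Psi_n$, which map into $\D_X(\bm r)$ because only finitely many coordinates are nonzero so the sum trivially converges in $X$), the inverse is norm-nonincreasing too, whence $C_\Phi$ is isometric. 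The subtle bookkeeping is entirely in (ii): ensuring the pulled-back function is globally bounded holomorphic on $B_{c_0}$ and not merely defined on a dense subset — here one uses that $\Ho^\infty$ functions are determined by, and bounded by, their values on the "polydisk-dense" set of finitely supported points, together with the Schauder basis projections $P_n$ converging strongly to the identity on $\D_X(\bm r)$ so that $f \circ (\text{lift of } P_n)$ approximates $f$.
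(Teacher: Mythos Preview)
Your overall architecture is exactly the paper's: define the coordinate map $\iota = \Phi \colon \D_X(\bm r) \to B_{c_0}$, pull back by composition to get a norm-nonincreasing algebra homomorphism $C_\Phi$, and then argue it is bijective and isometric. You also correctly spot the real obstacle: the would-be inverse $y \mapsto \sum_j r_j y_j e_j$ need not converge in $X$ (no unconditionality, no bound on $r_j$), so one must work through finite truncations. The paper does precisely this, calling the finite-sum map $\kappa \colon c_{00} \to X$ and noting explicitly that $\kappa$ is not continuous in general.

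The one genuine gap is your mechanism for surjectivity in (ii). Extending $g := f \circ \Phi^{-1}$ from $\Phi(\D_X(\bm r))$ to $rB_{c_0}$ by uniform continuity is circular: you would need to know \emph{a priori} that $f \circ \Phi^{-1}$ is uniformly continuous on $rB_{c_0} \cap \Phi(\D_X(\bm r))$, but $\Phi^{-1}$ is typically discontinuous (take $X=\ell_1$, $\bm r = (1,1,\dots)$: the vectors $y_n = n^{-1}\sum_{j\le n} e_j$ satisfy $\|y_n\|_\infty \to 0$ while $\|\Phi^{-1}(y_n)\|_{\ell_1}=1$), so uniform continuity of $g$ is not available from that of $f$. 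It does hold \emph{a posteriori}, once the theorem is proved, but cannot serve as an input.

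The paper replaces this step by the precise tool your ``polydisk-dense'' remark is reaching for: restrict $g = f \circ \kappa$ to each finite polydisk $\D^N$ (where $\kappa$ is a linear map on a finite-dimensional space, hence continuous, so $g\vert_{\D^N}$ is holomorphic), read off the monomial coefficients $(c_\alpha)_{\alpha \in \N_0^{(\N)}}$, observe that $\sup_N \sup_{z\in\D^N}\bigl|\sum_{\alpha\in\N_0^N} c_\alpha z^\alpha\bigr| \le \|f\|_{\D_X(\bm r)} < \infty$, and invoke Hilbert's criterion to obtain a unique $h \in \Ho^\infty(B_{c_0})$ with those coefficients. One then checks $\Psi(h)=f$ using $P_n x \to x$ in $X$, exactly the ``Schauder projections converge strongly'' idea you mention at the end. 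So your instinct in the last sentence is the correct one; the uniform-continuity route should be dropped in favor of the monomial/Hilbert-criterion argument.
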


\begin{proof}
	Let us denote by $\iota$ the linear injective mapping from $X$ to $c_0$ defined as
	$\iota (x) = \left( r_j^{-1} e_j^* (x) \right)_{j=1}^{\infty}$ for $x = \sum_{j=1}^{\infty} e_j^* (x) e_j.$
	Then $\iota$ maps the set $\D_X (\bm{r})$ into $B_{c_0}$.
	Consider the mapping $\Psi : \Ho^\infty ( B_{c_0} ) \rightarrow \Ho^\infty ( \D_X (\bm{r}))$ defined as
	\[
	\Psi (f) (x) = (f \circ \iota) (x) \quad \text{for every} \,\, f \in \Ho^\infty (B_{c_0}) \,\, \text{and} \,\, x \in \D_X (\bm{r}).
	\]
	It is clear that $\Psi$ is well-defined.
		
	To check that $\Psi$ is injective, suppose $\Psi (f) = 0$ for some $f\in \Ho^\infty (B_{c_0})$. If $z=(z_j)_{j=1}^{N} \in B_{c_{00}}$, then it is obvious that $x'= \sum_{j=1}^{N} r_j z_j e_j \in \D_X (\bm{r})$ and $\iota (x') = z$. So, $f(z) = (f \circ \iota) (x') = \Psi (f) (x') =0$ which implies that $f = 0$ on $B_{c_{00}}$. Noting that $B_{c_{00}}$ is dense in $B_{c_0}$, we have that $f =0$ on $B_{c_0}$.
	
	We claim that $\Psi$ is surjective. Let us denote by $\kappa$ the natural inclusion mapping from $c_{00}$ into $X$ mapping $z = (z_j)_{j =1}^\infty \in {c_{00}}$ to $\sum_{j=1}^{\infty} r_j z_j e_j$ (a finite sum) and let $g \in \Ho^\infty (\D_X (\bm{r}))$ be given.
	Notice that the mapping $\kappa$ is not continuous in general, nevertheless, we claim that the function $g \circ \kappa$ is holomorphic on $B_{c_{00}}$. To this end, as $g \circ \kappa$ is bounded on $B_{c_{00}}$, it suffices to check that $g \circ \kappa$ is G\^ateaux holomorphic on $B_{c_{00}}$ \cite[Theorem 15.35]{DeGaMaSe}. Fix $a = (a_j)_{j=1}^\infty \in B_{c_{00}}$ and $w=(w_j)_{j=1}^\infty \in {c_{00}}$, and consider the mapping $\lambda \mapsto (g \circ \kappa )(a + \lambda w)$ on the set $ \Omega := \{ \lambda \in \C : z_0 + \lambda w_0 \in B_{c_{00}} \}$. Note that
	\[
	(g \circ \kappa) (a + \lambda w) = g \left( \sum_{j=1}^m r_j a_j e_j + \lambda \sum_{j=1}^m r_j w_j e_j \right)
	\]
	for some $m \in \N$, and it is holomorphic in the variable $\lambda \in \Omega$ since $g$ is holomorphic.
	Now, consider $g \circ \kappa \vert_{\D^N} : \D^N \rightarrow \C$ for each $N \in \N$. Note that the domain $\D^N$ is endowed with the supremum norm induced from $c_{00}$.
	 As the function $g \circ \kappa \vert_{\D^N}$ is separately holomorphic, it is holomorphic on $\D^N$ in classical several variables sense due to Hartogs' theorem.
	From this, we have a unique family $(c_{\alpha, N} (g))_{\alpha \in \N_0^N}$ such that
	\[
	\left( g \circ \kappa \vert_{\D^N} \right)(z) = \sum_{\alpha \in \N_0^N} c_{\alpha, N} (g) z^{\alpha} \quad (z \in \D^N).
	\]
	It follows that there exists a unique family $(c_{\alpha}(g))_{\alpha \in \N_0^{(\N)}}$ such that
	\[
	(g \circ \kappa)(z) = \sum_{\alpha \in \N_{0}^{(\N)}} c_{\alpha} (g) z^{\alpha} \quad (z \in B_{c_{00}}).
	\]
	Also, note that
	\[
	\sup_{N \in \N} \sup_{z \in \D^N} \left| \sum_{\alpha \in \N_0^N} c_{\alpha} (g) z^{\alpha} \right| \leq \sup_{N\in\N} \sup_{z \in \D^N} |\left( g \circ \kappa \vert_{\D^N} \right)(z)| \leq \|g\|_{\D_X (\bm{r})} < \infty.
	\]
	By Hilbert's criterion (see \cite[Theorem 2.21]{DeGaMaSe}), there exists a unique $f \in \Ho^\infty (B_{c_0})$ such that $c_{\alpha} (f) = c_{\alpha} (g)$ for every $\alpha \in \N_{0}^{(\N)}$ with
	\[
	\|f\|_{\infty} := \sup_{x \in B_{c_0}} |f(x)| = \sup_{N \in \N} \sup_{z \in \D^N} \left| \sum_{\alpha \in \N_0^N} c_{\alpha} (g) z^{\alpha} \right|.
	\]
	Hence $f =g \circ \kappa$ on $B_{c_{00}}$ which implies that
	\begin{align*}
	\Psi (f) (x) = f(\iota (x))
	&= \lim_{n \rightarrow \infty } (g \circ \kappa ) ( ( \iota \circ P_n) (x) ) \quad (\text{since } (\iota \circ P_n )(x) \in B_{c_{00}}) \\
	&= \lim_{n \rightarrow \infty } g(P_n (x)) = g(x)
	\end{align*}
	for every $x \in \D_X (\bm{r})$, i.e., $\Psi (f) = g$.
	As
	\[
	\|f\|_{\infty} \leq \|g\|_{\D_X (\bm{r})} = \| \Psi(f)\|_{\D_X (\bm{r})} \leq \|f\|_{\infty},
	\]
	where the last inequality is obvious by definition of $\Psi$, we see that $\Psi$ is an isometry. It is clear by definition that $\Psi$ is a multiplicative mapping.
\end{proof}


From this moment on, we shall deal only with the domain $\D_X$ since all results on the Banach algebra $\Ho^\infty (\D_X)$ can be translated to a Banach algebra $\Ho^\infty (\D_X (\bm{r}))$ with $\inf\limits_{n\in\N} r_n >0$ with the aid of the isometric isomorphism in Theorem \ref{Hol:D(X):B_{c_0}}.

Throughout this paper, we denote by $\Psi$ the isometric isomorphism from $\Ho^\infty (B_{c_0})$ onto $\Ho^\infty ( \D_X)$ given in the proof of Theorem \ref{Hol:D(X):B_{c_0}}. We also keep the notations $\iota : X \rightarrow c_0$ and $\kappa : c_{00} \rightarrow X$ defined in the above with respect to the sequence $\bm{r} = (1,1,\ldots)$. We also let ${\Phi}$ denote the restriction of the adjoint of $\Psi$ to $\M (\Ho^\infty (\D_X ))$. Note that the mapping ${\Phi}$ is an homeomorphism from $\M (\Ho^\infty ( \D_X ))$ onto $\M (\Ho^\infty (B_{c_0}))$.

From the fact that every continuous polynomial on $c_0$ is weakly continuous on bounded sets (see \cite[Proposition 1.59]{D} or \cite[Section 3.4]{G}), we first see that if a continuous polynomial on $X$ is bounded on $\D_X$, then it is weakly continuous on bounded sets.

\begin{prop}
Let $X$ be a Banach space with a normalized Schauder basis and let $P$ be a continuous polynomial on $X$. If $P$ is bounded on $\D_X$, then $P$ is weakly continuous on bounded sets.
\end{prop}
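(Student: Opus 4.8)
The plan is to transfer the problem to $c_0$ via the isometry $\Psi$ of Theorem~\ref{Hol:D(X):B_{c_0}} and then invoke the known fact that continuous polynomials on $c_0$ are weakly continuous on bounded sets. First I would record the basic properties of the two maps involved. The linear map $\iota\colon X\to c_0$, $\iota(x)=(e_j^*(x))_{j}$, is well defined (since $e_j^*(x)\to 0$ for $x\in X$) and bounded, because $\|\iota(x)\|_\infty\le(\sup_j\|e_j^*\|)\,\|x\|<\infty$; in particular $\iota$ is weak-to-weak continuous and maps bounded sets to bounded sets. The map $\kappa\colon c_{00}\to X$, $\kappa(z)=\sum_j z_j e_j$, satisfies $\iota\circ\kappa=\mathrm{id}_{c_{00}}$ and $\kappa(B_{c_{00}})\subset\D_X$. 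Since $P$ is bounded on the nonempty open set $\D_X$, we have $P\in\Ho^\infty(\D_X)$, so by Theorem~\ref{Hol:D(X):B_{c_0}} there is $f\in\Ho^\infty(B_{c_0})$ with $\Psi(f)=P$, i.e.\ $f\circ\iota=P$ on $\D_X$; composing with $\kappa$ gives $f=P\circ\kappa$ on $B_{c_{00}}$.

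Next I would show that $f$ is a continuous polynomial on $c_0$, of degree at most $m:=\deg P$. For each $N$ the restriction $f|_{\D^N}$ equals $z\mapsto P(z_1e_1+\cdots+z_Ne_N)$, which is a polynomial of degree $\le m$. Writing the homogeneous (Taylor) expansion of $f$ at the origin as $f=\sum_{k\ge 0}Q_k$ on $B_{c_0}$, with $Q_k$ a continuous $k$-homogeneous polynomial on $c_0$, and restricting to each $\D^N$, uniqueness of the homogeneous expansion on $\D^N$ forces $Q_k|_{\D^N}=0$ for $k>m$; as this holds for every $N$ and $c_{00}$ is dense in $c_0$, we get $Q_k=0$ for $k>m$, hence $f=\sum_{k=0}^m Q_k$ is a continuous polynomial.

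Finally I would transfer back. Since $\iota$ is bounded and linear, $f\circ\iota$ is a continuous polynomial on $X$ that agrees with $P$ on the nonempty open set $\D_X$; because continuous polynomials are entire, the identity theorem gives $P=f\circ\iota$ on all of $X$. Now if $(x_\lambda)$ is a bounded net in $X$ with $x_\lambda\to x$ weakly, then $(\iota(x_\lambda))$ is bounded in $c_0$ and $\iota(x_\lambda)\to\iota(x)$ weakly, so $f(\iota(x_\lambda))\to f(\iota(x))$ by weak continuity of $f$ on bounded subsets of $c_0$; that is, $P(x_\lambda)\to P(x)$, proving that $P$ is weakly continuous on bounded sets.

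I expect the only delicate point to be the middle step, namely deducing that $f$ is globally a polynomial from the fact that its restriction to every finite-dimensional section has degree $\le m$. Here one should work with the homogeneous Taylor expansion of $f$ at $0$ rather than the monomial expansion, since each homogeneous component is a genuine continuous polynomial on $c_0$ and is determined by its values on the dense subspace $c_{00}$. The remaining steps are routine once Theorem~\ref{Hol:D(X):B_{c_0}} and the cited fact about polynomials on $c_0$ are in hand.
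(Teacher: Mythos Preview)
Your proof is correct and follows essentially the same route as the paper: transfer $P$ to $c_0$ via $\Psi$, recognize that $\Psi^{-1}(P)$ is a polynomial on $c_0$, and then invoke the known weak continuity of polynomials on $c_0$ together with the weak-to-weak continuity of $\iota$. The only cosmetic differences are that the paper first reduces to the homogeneous case (so it need only check weak continuity on a small ball inside $\D_X$, extending by homogeneity) and simply asserts that $\Psi^{-1}(P\vert_{\D_X})$ is a homogeneous polynomial, whereas you keep $P$ of arbitrary degree, verify explicitly via the Taylor expansion that $f$ is a polynomial, and use the identity theorem to obtain $P=f\circ\iota$ on all of $X$.
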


\begin{proof}
Without loss of generality, we may assume that $P$ is an $n$-homogenous polynomial. To prove that $P$ is weakly continuous on bounded sets, it suffices by homogeneity to prove that $P$ is weakly continuous on some ball $r B_X$. Let $(x_\alpha) \subset \big(\sup\limits_{j \in \N} \|e_j^* \| \big)^{-1} B_X$ be a net converging weakly to some $z$. As $P$ is bounded on $\D_X$, it can be observed that $Q:= \Psi^{-1} ( P \vert_{\D_X}) \in \Ho^\infty (B_{c_0})$ is an $n$-homogeneous polynomial on ${c_0}$. So, $Q$ is weakly continuous on bounded sets. Now,
\begin{align*}
P(x_\alpha) = (P \vert_{\D_X}) (x_\alpha) &= \Psi (Q) (x_\alpha) = Q ( \iota (x_\alpha) ) \rightarrow Q ( \iota(z)) = P (z)
\end{align*}
since $\iota (x_\alpha) \xrightarrow{w(c_0, \,\ell_1)} \iota(z)$.
\end{proof}

Recall that if $U$ is a convex bounded open set in $X$, then $\Au (U)$ becomes a subalgebra of $\Ho^\infty (U)$ as uniformly continuous functions on convex bounded sets of a Banach space are bounded.
Thus, $\Au (B_{c_0})$ is a subalgebra of $\Ho^\infty (B_{c_0})$ and observe that the isometric isomorphism $\Psi$ from $\Ho^\infty (B_{c_0})$ onto $\Ho^\infty ( \D_X)$ maps $\Au (B_{c_0})$ into the algebra $\Au (\D_X)$. 
In other words, $\Psi ( \Au (B_{c_0})) \subset \Au (\D_X)$.
So, one might ask if Theorem \ref{Hol:D(X):B_{c_0}} holds for the Banach algebras $\Au (B_{c_0})$ and $\Au (\D_X)$, i.e., whether $\Au (B_{c_0})$ is isometrically isomorphic to $\Au (\D_X)$. The following observations give a negative answer to this question. Moreover, we see that $\D_X$ is not bounded in general and that $\Au (\D_X)$ is not a subalgebra of $\Ho^\infty (\D_X)$.

We would like to thank Daniel Carando for pointing out to us the following result.

\begin{prop}[D. Carando]\label{prop:unbounded_D_X}
If $X$ is a Banach space with a normalized Schauder basis $(e_j)_{j=1}^{\infty}$, then the set $\D_X$ is bounded if and only if $X$ is isomorphic to $c_0$.
\end{prop}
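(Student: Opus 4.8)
The plan is to test the boundedness of $\D_X$ against the natural linear map $\iota\colon X\to c_0$, $\iota(x)=(e_j^*(x))_{j\in\N}$, already introduced in the paper (for $\bm{r}=(1,1,\dots)$), and then to read an isomorphism of $X$ onto $c_0$ directly off $\iota$.

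First I would record the elementary properties of $\iota$. Since $\{e_j\}$ is a \emph{normalized} Schauder basis, $\sup_j\|e_j^*\|<\infty$, so $\iota$ is a bounded linear operator; it is injective, because $\iota(x)=0$ forces all $e_j^*(x)=0$ and hence $x=0$; and its range genuinely lies in $c_0$, since $|e_j^*(x)|=\|(P_j-P_{j-1})x\|\to 0$. By the very definition of $\D_X$ one has $\D_X=\iota^{-1}(B_{c_0})=\{x\in X:\|\iota(x)\|_\infty<1\}$, and as $0\in\D_X$, boundedness of $\D_X$ simply means $\D_X\subseteq MB_X$ for some $M>0$.

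The heart of the matter is the equivalence ``$\D_X$ is bounded $\iff$ $\iota$ is bounded below''. If $\|\iota(x)\|_\infty\ge c\|x\|$ for all $x$, then any $x\in\D_X$ satisfies $c\|x\|\le\|\iota(x)\|_\infty<1$, so $\D_X\subseteq c^{-1}B_X$. Conversely, assume $\D_X\subseteq MB_X$ and fix $x\in X$ with $x\ne 0$; then $\iota(x)\ne 0$ by injectivity, so $y:=x/(2\|\iota(x)\|_\infty)$ satisfies $\|\iota(y)\|_\infty=\tfrac12<1$, i.e.\ $y\in\D_X$, whence $\|y\|\le M$; rearranging gives $\|\iota(x)\|_\infty\ge\tfrac{1}{2M}\|x\|$. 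Granting this equivalence: if $\D_X$ is bounded, then $\iota$ is a topological embedding, so its range $\iota(X)$ is a closed subspace of $c_0$ containing every $\iota(e_j)$, i.e.\ every canonical unit vector of $c_0$; hence $\iota(X)\supseteq\overline{\spann}\{\iota(e_j):j\in\N\}=c_0$, so $\iota\colon X\to c_0$ is onto and therefore an isomorphism, and $X$ is isomorphic to $c_0$. For the converse, if $X\cong c_0$ --- which one reads here, as in the motivating example $\D_{c_0}=B_{c_0}$, via the basis $\{e_j\}$ being equivalent to the unit vector basis of $c_0$ --- then $\iota$ is an isomorphism, in particular bounded below, and the equivalence above returns that $\D_X$ is bounded.

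I expect the main obstacle to be the forward half of that equivalence: one must produce, from an arbitrary nonzero $x\in X$, an admissible point $y\in\D_X$ proportional to $x$, which is exactly where the (harmless but essential) factor $2$ enters, forced by $\D_X$ being open and hence defined by strict inequalities, and where injectivity of $\iota$ is needed for the construction to make sense. The remaining ingredients --- that $\iota$ is bounded and lands in $c_0$, and that its range exhausts $c_0$ once it is closed --- are routine, the normalization of the basis being used only to guarantee boundedness of $\iota$.
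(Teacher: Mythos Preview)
Your argument is correct and is essentially the paper's proof rephrased through the map $\iota$: the paper shows directly that $\D_X\subset RB_X$ forces $\|x\|\le R\sup_j|e_j^*(x)|$, which together with the trivial reverse inequality gives that $(e_j)$ is equivalent to the canonical $c_0$ basis; your ``$\iota$ is bounded below, hence a closed embedding, hence onto'' is exactly this norm equivalence plus the observation that $\iota(e_j)$ is the $j$-th unit vector. The converse is read in both places as ``$(e_j)$ is equivalent to the $c_0$ basis'', which you correctly flag.
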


\begin{proof}
It is clear that if $X$ is isomorphic to $c_0$, then $\D_X$ is bounded. Assume that $\D_X$ is bounded. Then there exists $R > 0$ such that $\D_X \subset R B_X$. This implies that if $x \in X$ satisfies $\sup_{j \in \mathbb{N}} |e_j^* (x)| < 1$ then $\|x\| < R$. It follows that
\[
(\sup_{j \in \mathbb{N}} \|e_j^*\|)^{-1} \sup_{j \in \N} |e_j^* (x)| \leq \| x \| \leq R \sup_{j \in \N} |e_j^* (x)|
\]
for every $x \in X$. Thus, $(e_j)_{j \in \N}$ is equivalent to the canonical basis of $c_0$.
\end{proof}

\begin{prop}\label{Au(D_X)_not_contained_in_Ho(D_X)}
If $X$ is a Banach space with a normalized Schauder basis, then
$\Psi ( \Au (B_{c_0})) \subset \Au (\D_X) \cap \Ho^\infty (\D_X)$. But the space $\Au (\D_X)$ is contained in $\Ho^\infty (\D_X)$ if and only if $X$ is isomorphic to $c_0$.
\end{prop}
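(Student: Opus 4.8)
The first inclusion is essentially a formality, so I would dispose of it in one line: it was already noted before the statement that $\Psi(\Au(B_{c_0})) \subset \Au(\D_X)$, and since $\Psi$ carries $\Ho^\infty(B_{c_0})$ onto $\Ho^\infty(\D_X)$ by Theorem~\ref{Hol:D(X):B_{c_0}}, we also have $\Psi(\Au(B_{c_0})) \subset \Ho^\infty(\D_X)$; intersecting the two gives the assertion. Thus the real content is the equivalence ``$\Au(\D_X) \subset \Ho^\infty(\D_X)$ if and only if $X$ is isomorphic to $c_0$'', which I plan to prove in two directions, both resting on Carando's Proposition~\ref{prop:unbounded_D_X}.

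For the ``if'' direction, I would assume $X$ is isomorphic to $c_0$. Then $\D_X$ is bounded by Proposition~\ref{prop:unbounded_D_X}; it is open (because $\inf_n r_n = 1 > 0$, by the remark above) and it is convex, being the intersection over $j \in \N$ of the convex slabs $\{x \in X : |e_j^*(x)| < 1\}$. Since a scalar uniformly continuous function on a bounded convex subset of a Banach space is automatically bounded (the segment-subdivision argument recalled just before Proposition~\ref{prop:unbounded_D_X}), every element of $\Au(\D_X)$ then lies in $\Ho^\infty(\D_X)$, giving $\Au(\D_X) \subset \Ho^\infty(\D_X)$.

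For the ``only if'' direction I would argue by contraposition: supposing $X$ is \emph{not} isomorphic to $c_0$, I want to produce an element of $\Au(\D_X)$ that is unbounded on $\D_X$. By Proposition~\ref{prop:unbounded_D_X} the set $\D_X$ is unbounded, so there is a sequence $(x_n) \subset \D_X$ with $\|x_n\| \to \infty$. A norm-unbounded sequence cannot be weakly bounded (apply the uniform boundedness principle through the canonical embedding $X \hookrightarrow X^{**}$), hence there exists $x^* \in X^*$ with $\sup_n |x^*(x_n)| = \infty$. The restriction $x^*|_{\D_X}$ is holomorphic, and it is uniformly continuous on $\D_X$ because $x^*$ is Lipschitz on all of $X$; therefore $x^*|_{\D_X} \in \Au(\D_X)$ while $x^*|_{\D_X} \notin \Ho^\infty(\D_X)$, so $\Au(\D_X) \not\subset \Ho^\infty(\D_X)$, completing the contrapositive.

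I do not anticipate a genuine obstacle here; the proof is short. The only points that deserve a moment's care are checking that $\D_X$ is convex (immediate from its description as an intersection of slabs) and making the passage from ``$\D_X$ is norm-unbounded'' to ``some $x^* \in X^*$ is unbounded on $\D_X$'' via Banach--Steinhaus, rather than attempting to locate a half-line inside $\D_X$ directly.
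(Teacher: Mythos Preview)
Your proof is correct and follows essentially the same route as the paper: the first inclusion is dismissed via Theorem~\ref{Hol:D(X):B_{c_0}} and the remark preceding the statement, while the equivalence is handled through Proposition~\ref{prop:unbounded_D_X} together with the weak-unboundedness/uniform-boundedness argument to produce an $x^* \in X^*$ unbounded on $\D_X$. The only difference is that you spell out the ``if'' direction (bounded convex open $\Rightarrow$ uniformly continuous functions are bounded) where the paper simply writes ``the converse is obvious''; note also that the relevant fact about bounded convex sets is recalled just before the present proposition rather than before Proposition~\ref{prop:unbounded_D_X}.
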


\begin{proof}
The first assertion follows from Theorem \ref{Hol:D(X):B_{c_0}}. By the above proposition, if $X$ is not isomorphic to $c_0$, the set $\D_X$ is not bounded, so not weakly bounded. Thus, there exists $x^* \in X^*$ such that $\sup_{z \in \D_X} |x^* (z)| = \infty$, while $x^*$ is clearly uniformly continuous on $\D_X$. The converse is obvious.
\end{proof}

Note that $\ell_1 = c_0^*$ embeds isometrically in $\Ho^\infty (B_{c_0})$. As the Banach algebras $\Ho^\infty (B_{c_0})$ and $\Ho^\infty (\D_X)$ are isometrically isomorphic, the image of $\ell_1$ under the isometric isomorphism $\Psi$ from $\Ho^\infty (B_{c_0})$ and $\Ho^\infty (\D_X)$ forms a closed subspace of $\Ho^\infty (\D_X)$. As a matter of fact, we will see from the next result that the image $\Psi (\ell_1)$ consists of elements of $\Ho^\infty (\D_X)$ which are linear on $X$.

\begin{prop}\label{prop:image_of_ell_1}
Let $X$ be a Banach space with a normalized Schauder basis $(e_j)_{j=1}^{\infty}$. Then $\Psi (\ell_1) = \Ho^\infty (\D_X) \cap X^*.$
\end{prop}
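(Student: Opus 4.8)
The plan is to prove the two inclusions $\Psi(\ell_1) \subseteq \Ho^\infty(\D_X) \cap X^*$ and $\Ho^\infty(\D_X) \cap X^* \subseteq \Psi(\ell_1)$ separately, using the explicit description of $\Psi$ from Theorem \ref{Hol:D(X):B_{c_0}} together with the monomial expansion machinery established in its proof. For the first inclusion, take $a = (a_j)_{j\in\N} \in \ell_1 = c_0^*$, so the associated functional on $c_0$ is $z \mapsto \sum_{j=1}^\infty a_j z_j$. Then $\Psi(a)(x) = (a \circ \iota)(x) = \sum_{j=1}^\infty a_j e_j^*(x)$ for $x \in \D_X$, and I would check directly that the partial sums converge (by $\ell_1$-summability of $(a_j)$ and the uniform bound $\sup_j \|e_j^*\| < \infty$ recalled just before Theorem \ref{Hol:D(X):B_{c_0}}) to a bounded linear functional on all of $X$ whose restriction to $\D_X$ equals $\Psi(a)$. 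Hence $\Psi(a) \in \Ho^\infty(\D_X) \cap X^*$.

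For the reverse inclusion, let $g \in \Ho^\infty(\D_X) \cap X^*$, i.e.\ $g$ is (the restriction to $\D_X$ of) a bounded linear functional on $X$. Set $f = \Psi^{-1}(g) \in \Ho^\infty(B_{c_0})$. The key point is to identify $f$ using the monomial coefficients $c_\alpha(g)$ introduced in the proof of Theorem \ref{Hol:D(X):B_{c_0}}: since $g$ is linear on $X$, the composition $g \circ \kappa$ on $B_{c_{00}}$ is linear, so its monomial expansion $\sum_{\alpha} c_\alpha(g) z^\alpha$ can only have nonzero coefficients for the first-order multi-indices $\alpha = \mathbf{e}_j$ (the $j$-th unit vector in $\N_0^{(\N)}$), with $c_{\mathbf{e}_j}(g) = g(e_j)$ (because $\kappa(\text{the }j\text{-th unit vector of }c_{00}) = e_j$ when $\bm r = (1,1,\dots)$). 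Therefore $f$ has monomial coefficients supported only on first-order indices, which forces $f$ to be a continuous linear functional on $c_0$, i.e.\ $f \in \ell_1$; explicitly $f(z) = \sum_j g(e_j) z_j$ and $(g(e_j))_j \in \ell_1$ since $f$ is bounded on $B_{c_0}$. Then $g = \Psi(f) \in \Psi(\ell_1)$.

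There is a small gap to be careful about in the previous paragraph: from "$g \circ \kappa$ is linear on $B_{c_{00}}$" I concluded "only first-order monomial coefficients survive", but strictly I should argue that a bounded function on $B_{c_0}$ whose monomial expansion (in the Hilbert-criterion sense of \cite[Theorem 2.21]{DeGaMaSe}) is supported on first-order indices is automatically an element of $\ell_1$. This is where I would invoke the uniform bound already displayed in the proof of Theorem \ref{Hol:D(X):B_{c_0}}, namely $\sup_{N}\sup_{z \in \D^N}|\sum_{\alpha \in \N_0^N} c_\alpha z^\alpha| \le \|g\|_{\D_X} < \infty$; specialized to first-order terms this says $\sup_N \sup_{z \in \D^N}|\sum_{j=1}^N g(e_j) z_j| < \infty$, and taking the supremum over $z \in \D^N$ gives $\sum_{j=1}^N |g(e_j)| \le \|g\|_{\D_X}$ for every $N$, hence $(g(e_j))_j \in \ell_1$ with $\|(g(e_j))_j\|_1 \le \|g\|_{\D_X}$.

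I expect the main obstacle to be the bookkeeping in the reverse inclusion: making rigorous that the only monomials appearing in $f = \Psi^{-1}(g)$ are the degree-one ones. The cleanest route is probably to avoid talking about $f$ directly and instead argue entirely on the $X$ side: define $b_j := g(e_j)$, show $(b_j)_j \in \ell_1$ by the $\D^N$-supremum estimate above, let $a = (b_j)_j \in \ell_1$, and then verify $\Psi(a) = g$ by checking they agree on the dense set $\bigcup_n P_n(\D_X) \subset \D_X$ (on which both are honest finite sums) exactly as in the last display of the proof of Theorem \ref{Hol:D(X):B_{c_0}}. That reduces everything to the first-inclusion computation plus one density argument, sidestepping any delicate claim about monomial supports, and the two inclusions together give $\Psi(\ell_1) = \Ho^\infty(\D_X) \cap X^*$.
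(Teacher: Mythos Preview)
Your proposal is correct and, once you adopt the ``cleanest route'' you describe at the end, it is essentially the paper's own argument. The paper also proves the forward inclusion by the estimate $|\Psi(y)(x)|\le(\sup_j\|e_j^*\|)\|y\|_{\ell_1}$, and for the reverse inclusion it evaluates $x^*$ at the points $z_k=\sum_{j=1}^k(1-\eps)e^{-i\theta_j}e_j\in\D_X$ to obtain $\sum_{j=1}^k(1-\eps)|x^*(e_j)|\le\|x^*\|_{\D_X}$, which is exactly your $\D^N$-supremum estimate written on the $X$ side rather than the $c_0$ side; it then concludes $\Psi((x^*(e_n))_n)=x^*$ directly. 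Your initial detour through the monomial support of $\Psi^{-1}(g)$ is unnecessary (as you yourself note), but it is not wrong.
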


\begin{proof}
Note that $\Psi$ maps linear functionals on $c_0$ to linear functionals on $X$. For if $y = (y_n)_{n=1}^\infty \in \ell_1$, then
\begin{equation*}
| \Psi (y) (x) | =\left| \sum_{n=1}^\infty y_n e_n^* (x) \right| \leq \left(\sup_{j\in\N} \|e_j^* \| \right) \sum_{n=1}^\infty |y_n| = \left(\sup_{j\in\N} \|e_j^* \| \right)\|y\|_{\ell_1},
\end{equation*}
for every $x \in B_X$. This implies that $\Psi (\ell_1) \subset \{ x^* \in \Ho^\infty (\D_X) : x^* \in X^* \}$.

Conversely, let $x^* \in \Ho^\infty (\D_X) \cap X^*$.
Choose $\theta_n \in \R$ such that $x^* (e_n) = |x^* (e_n)| e^{i\theta_n} $ for each $n \in \N$. Fix $\eps \in (0,1)$ and let $z_k = \sum_{j=1}^{k} (1-\eps) e^{-i\theta_j} e_j$ for each $k \in \N$. Then $z_k \in \D_X$ for each $k \in \N$ and
	\[
	\sum_{n=1}^{k} (1-\eps ) |x^* (e_n)| = |x^*(z_k)| \leq \|x^*\|_{\D_X}
	\]
	for every $k \in \N$ and $\eps \in (0,1)$. By letting $k \rightarrow \infty$ and $\eps \rightarrow 0$ we obtain that $\| (x^* (e_j) )_{j=1}^{\infty} \|_{\ell_1} \leq \|x^*\|_{\D_X}$.
	Thus, $(x^* (e_j) )_{j=1}^{\infty}$ belongs to $\ell_1$. On the other hand,
	\[
	|x^* (x) | = \left| \sum_{j=1}^\infty x^* (e_j) e_j^* (x) \right| \leq \sum_{j=1}^\infty | x^* (e_j)| = \| (x^* (e_j) )_{j=1}^{\infty} \|_{\ell_1}
	\]
	for every $x \in \D_X$; hence $\| (x^* (e_j) )_{j=1}^{\infty} \|_{\ell_1} = \|x^*\|_{\D_X}$ and $\Psi ((x^* (e_n) )_{n=1}^\infty ) = x^*$.
\end{proof}

	 Let us observe from $\big(\sup\limits_{j\in\N} \|e_j^* \| \big)^{-1} B_X \subset \D_X$ that we have
$\sup_{x \in B_X} |x^* (x)| \leq \left(\sup_{j\in\N} \|e_j^* \| \right) \| x^* \|_{\D_X}$ for every $x^* \in X^\sharp$. Moreover, if we consider the bounded linear operator $T : \Ho^\infty (\D_X) \rightarrow \Ho^\infty (B_X)$ defined as $(T f)(x) = f\big( \big(\sup\limits_{j\in\N} \|e_j^* \| \big)^{-1} x \big)$ for every $f \in \Ho^\infty (\D_X)$ and $x \in B_X$, then $\|T f \|_{B_X} \leq \|f\|_{\D_X}$ for every $f \in \Ho^\infty (\D_X)$ and $T$ is injective. However, $T$ is not a monomorphism (injection with closed image) in general. For example, let $X = \ell_p$ with $1 < p < \infty$. For each $n \in \N$, put $f_n = \Psi ( (\underbrace{n^{-1}, \ldots, n^{-1}}_{n\text{-many}}, 0,\ldots) ) \in \Ho^\infty (\ell_p \cap \D^\N)$. Then $\|f_n\|_{\ell_p\cap\D^\N} = \| ({n^{-1}, \ldots, n^{-1}}, 0,\ldots)\|_{\ell_1} = 1$ for every $n \in \N$. However,
\begin{align*}
|(T f_n) (y)| =\left| \sum_{j=1}^{n} \frac{1}{n} y_j \right| \leq \left( \sum_{j=1}^{n} \left(\frac{1}{n}\right)^q \right)^{\frac{1}{q}} \left( \sum_{j=1}^n |y_j|^p \right)^{\frac{1}{p}} \leq \frac{1}{n^{1-\frac{1}{q}}}
\end{align*}
for every $y \in B_{\ell_p}$, where $q$ is the conjugate of $p$. Thus, $\|T f_n \|_{B_{\ell_p}} \leq \frac{1}{n^{1-\frac{1}{q}}} \rightarrow 0$ as $n \rightarrow \infty$ while $\|f_n\|_{\ell_p \cap \D^\N} = 1$ for every $n \in \N$. This implies that $T : \Ho^\infty (\ell_p\cap\D^\N) \rightarrow \Ho^\infty (B_{\ell_p})$ is not a monomorphism.


\section{Fibers and cluster values of $\M (\Ho^\infty (\D_X))$}\label{sec:fiber}

In this section, we would like to study fibers of the spectrum $\M (\Ho^\infty (\D_X))$ which could help to describe the analytic structure of the spectrum. To this end, as we mentioned in Section \ref{Background}, we first consider the subspace $X^\sharp$ of the algebra $\Ho^\infty (\D_X)$ defined as follows as in \eqref{X_sharp_U}:
	\[
	X^\sharp = \left(\Ho^\infty (\D_X) \cap X^* , \|\cdot\|_{\D_X} \right).
	\]
	It is clear that $X^\sharp$ is a closed subspace of $\Ho^\infty (\D_X)$ endowed with the norm $\| \cdot \|_{\D_X}$ (thus, a Banach space).
	Moreover, the structure of the space $X^\sharp$ is not as rare as it might seem and Proposition \ref{prop:image_of_ell_1} proves that $X^\sharp$ is actually nothing but $\Psi (\ell_1)$; hence it is isometrically isomorphic to $\ell_1$. We have observed the following result.

\begin{prop}\label{ell_1:X^dagger}
	If $X$ is a Banach space with a normalized basis $(e_j)_{j=1}^{\infty}$, then
	\[
	\| x^* \|_{\D_X} = \sum_{j=1}^{\infty} |x^* (e_j)|
	\]
	for every $x^* \in X^\sharp$.
In particular, the mapping $\tau : X^\sharp \rightarrow \ell_1$ defined as $\tau (x^*) = (x^* (e_j))_{j=1}^{\infty}$ is an isometric isomorphism.
\end{prop}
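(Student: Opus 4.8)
The plan is to establish the norm formula $\|x^*\|_{\D_X} = \sum_{j=1}^\infty |x^*(e_j)|$ directly, essentially by extracting the relevant estimates that already appear inside the proof of Proposition \ref{prop:image_of_ell_1}, and then deduce the statement about $\tau$ as a formal consequence. So the first step is the inequality $\|x^*\|_{\D_X} \leq \sum_{j=1}^\infty |x^*(e_j)|$: for any $x \in \D_X$ one has $x = \sum_j e_j^*(x) e_j$ with $|e_j^*(x)| < 1$, hence $|x^*(x)| = \big|\sum_j x^*(e_j) e_j^*(x)\big| \leq \sum_j |x^*(e_j)|$, and taking the supremum over $x \in \D_X$ gives the bound (this also shows $\sum_j |x^*(e_j)| < \infty$ is what makes the right-hand side meaningful, but since $x^* \in X^\sharp$ we already know from Proposition \ref{prop:image_of_ell_1} that $(x^*(e_j))_j \in \ell_1$).

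For the reverse inequality I would reuse the test vectors from the earlier proof: pick $\theta_n \in \R$ with $x^*(e_n) = |x^*(e_n)| e^{i\theta_n}$, and for $\eps \in (0,1)$ and $k \in \N$ set $z_k^\eps = \sum_{j=1}^k (1-\eps) e^{-i\theta_j} e_j \in \D_X$. Then $x^*(z_k^\eps) = (1-\eps)\sum_{j=1}^k |x^*(e_j)|$, so $(1-\eps)\sum_{j=1}^k |x^*(e_j)| \leq \|x^*\|_{\D_X}$; letting $k \to \infty$ and then $\eps \to 0$ yields $\sum_{j=1}^\infty |x^*(e_j)| \leq \|x^*\|_{\D_X}$. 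Combining the two inequalities gives the claimed identity. Note this argument only uses that $(e_j)$ is a normalized Schauder basis (so that $e_j^*$ makes sense and $\|e_j\| = 1$); unconditionality is not needed.

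Finally, for the map $\tau(x^*) = (x^*(e_j))_{j=1}^\infty$: it is clearly linear, it lands in $\ell_1$ by Proposition \ref{prop:image_of_ell_1} (or by the computation above), and the identity just proved says precisely $\|\tau(x^*)\|_{\ell_1} = \|x^*\|_{\D_X}$, so $\tau$ is an isometry. Injectivity is immediate from this, or from the fact that a bounded linear functional is determined by its values on the basis. For surjectivity, given $(a_j)_j \in \ell_1$ the functional $x \mapsto \sum_j a_j e_j^*(x)$ is bounded on $B_X$ (since $\sup_j \|e_j^*\| < \infty$) hence lies in $X^*$, is bounded on $\D_X$ by the first step, and its value at $e_j$ is $a_j$; thus it is a preimage. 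I do not anticipate a genuine obstacle here — the content is already contained in Propositions \ref{prop:image_of_ell_1} and the structure is bookkeeping; the only mild point to be careful about is making the normalization $\|e_j\| = 1$ explicit so that the test vectors $z_k^\eps$ indeed lie in $\D_X$ and the sup-norm computation is clean.
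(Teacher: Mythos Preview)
Your proposal is correct and matches the paper's approach exactly: the paper does not give a separate proof of this proposition but simply observes in the preceding paragraph that it follows from Proposition~\ref{prop:image_of_ell_1}, whose proof already contains both inequalities (the test vectors $z_k = \sum_{j=1}^k (1-\eps)e^{-i\theta_j}e_j$ and the estimate $|x^*(x)| \leq \sum_j |x^*(e_j)|$ for $x \in \D_X$) that you have extracted. Your write-up is essentially a faithful unpacking of that reference.
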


To define fibers of $\M (\Ho^\infty (\D_X))$ as in Definition \ref{def:fibers_U}, we need to check that the restriction mapping $\pi : \M (\Ho^\infty (\D_X)) \rightarrow \overline{B}_{(X^\sharp)^*}$ is surjective in this case. Before proceeding further, let us clarify first some relationship between the mappings $\iota : X \rightarrow c_0$, $\kappa : c_{00} \rightarrow X$ (see the comment after the proof of Theorem \ref{Hol:D(X):B_{c_0}}) and the isometry $\tau: X^\sharp \rightarrow \ell_1$ in Proposition \ref{ell_1:X^dagger}.
Consider the following diagram.

\begin{remark}\label{relation_tau_iota'}
Let $X$ be a Banach space with a normalized Schauder basis.
\[
	\begin{tikzcd}
{c_{00}}\arrow{r}{\kappa} &X \arrow{r}{\iota} \arrow[d,-, dashed]{} & c_0 \arrow[d,-, dashed]{}& \, \, \\
\, & X^* & \ell_1 \arrow{l}{\iota^*} \arrow[d,-, dashed]{} & X^\sharp \arrow{l}{\tau} \arrow[d,-, dashed]{} & \Psi (\ell_1) \ar[equal]{l} \arrow[d,-, dashed]{}\\
\, & \, & \ell_\infty \arrow{r}{\tau^*} & (X^\sharp)^* & \Psi (\ell_1)^* \ar[equal]{l} \\
\end{tikzcd}
	\]	
We have that
(a) $\iota \circ \kappa = \text{Id}_{c_{00}}$; (b) $\tau = (\Psi \vert_{\ell_1})^{-1}$; (c) $\iota^* = \tau^{-1}$; (d) $\kappa = \tau^* \vert_{c_{00}}$.
\end{remark}

Note that an element $x$ in $\D_X$ can be naturally considered as an element in $(X^\sharp)^*$ which sends $x^* \in X^\sharp$ to $x^* (x)$ and we observe from (d) of Remark \ref{relation_tau_iota'} that the image of $B_{c_{00}}$ under the mapping $\tau^*$ is contained in $\D_X$. One might ask if $\tau^*$ sends elements in $B_{c_0}$ to elements in $\D_X$ as it does for elements in $B_{c_{00}}$. However, this is not the case when $X$ is $\ell_p$ with $1 \leq p < \infty$. Indeed, consider $u = \left( \frac{1}{2^{1/p}}, \frac{1}{3^{1/p}}, \ldots \right) \in B_{c_0}$. Then
\[
(\tau^* (u) )(e_n^*) = \langle u, \tau (e_n^*) \rangle = u_n = \frac{1}{(n+1)^{1/p}}
\]
for every $n \in \N$. It follows that if $\tau^* (u) \in \D_{\ell_p}$, then $\tau^* (u)$ would coincide with $u$. This  contradicts to that $u$ does not belong to $\ell_p$.

We now study the restriction mapping $\pi : \M (\Ho^\infty (\D_X) ) \rightarrow \overline{B}_{(X^\sharp)^*}$, i.e., $\pi (\phi) = \phi \vert_{X^\sharp}$ for every $\phi \in \M (\Ho^\infty (\D_X) )$.
The following remark proves that the restriction mapping $\pi: \M (\Ho^\infty (\D_X) ) \rightarrow \overline{B}_{(X^\sharp)^*}$ is surjective by presenting a relationship between the mapping $\pi $ and the restriction mapping $\pi_{c_0} : \M (\Ho^\infty (B_{c_0}) ) \rightarrow \overline{B}_{\ell_\infty}$.

\begin{remark}\label{relation_through_tau} Let $X$ be a Banach space with a normalized Schauder basis.
	Then the action between elements in the space $X^\sharp$ and the spectrum $\M (\Ho^\infty (\D_X))$ is related to the one between elements in $\ell_1$ and $\M (\Ho^\infty (B_{c_0}))$. Indeed,
	given $x^* \in X^\sharp$ and $\phi \in \Ho^\infty (\D_X)$, we have
\begin{align*}
\Phi (\phi) ( \tau (x^*))
&= \phi \left( x \in \D_X \leadsto \sum_{j=1}^{\infty} x^*(e_j) e_j^* (x) \right) = \phi (x^*).
\end{align*}
In other words, if we denote by $\pi_{c_0}$ the natural surjective mapping from $\M (\Ho^\infty ( B_{c_0}))$ to $\overline{B}_{\ell_\infty}$, then $	\pi (\phi) (\cdot) = \pi_{c_0} (\Phi (\phi)) (\tau (\cdot) ) \, \text{ on } X^\sharp.$
	Therefore, the following diagram commutes:
	\[
	\begin{tikzcd}[row sep=large]
\M (\Ho^\infty (\D_X) ) \arrow{r}{\Phi} \arrow[swap]{d}{\pi} & \M(\Ho^\infty ( B_{c_0})) \arrow{d}{\pi_{c_0}} \\
\overline{B}_{(X^\sharp)^*} & \overline{B}_{\ell_\infty} \arrow{l}{\tau^* \vert_{\overline{B}_{\ell_\infty}} }
\end{tikzcd}
	\]
	In particular, the restriction mapping $\pi$ is surjective. Moreover, if $(\phi_\alpha)$ is a net in $\M (\Ho^\infty (\D_X))$ that converges to some $\phi$ in $\M (\Ho^\infty (\D_X))$, i.e., $\phi_\alpha (f) \rightarrow \phi(f)$ for every $f \in \Ho^\infty (\D_X)$, then $\pi (\phi_\alpha) \rightarrow \pi (\phi)$ in the $w ( (X^\sharp)^*, X^\sharp )$-topology.
\end{remark}

As the space $X^\sharp$ is not trivial and the restriction mapping $\pi : \M (\Ho^\infty (\D_X)) \rightarrow \overline{B}_{(X^\sharp)^*}$ is surjective, we can restate Definition \ref{def:fibers_U} in the case of the Banach algebra $\Ho^\infty (\D_X)$ as follows.

\begin{definition}\label{def:fibers}
Let $X$ be a Banach space with a normalized basis. For $z \in \overline{B}_{(X^\sharp)^*}$, the \emph{fiber} $\M_z (\Ho^\infty (\D_X ))$ of the spectrum $\M (\Ho^\infty (\D_X ))$ at $z$ is defined as
\[
\M_z (\Ho^\infty (\D_X)) = \{ \phi \in \M (\Ho^\infty (\D_X)) : \pi (\phi) = z \} = \pi^{-1} (z).
\]
\end{definition}


On the other hand, we can view $\D_X$ as a subset of $\overline{B}_{(X^\sharp)^*}$ as each $x \in \D_X$ induces a linear functional with norm less than or equal to one on $X^\sharp$ which maps $x^* \in X^\sharp \mapsto x^* (x)$. From this point of view, we can consider the $w ((X^\sharp)^*, X^\sharp )$-closure of $\D_X$ in the dual space $(X^\sharp)^*$.
For convenience, we shall simply denote $w ((X^\sharp)^*, X^\sharp )$ by $\sigma$ throughout this paper.
In the following result, we show that the $\sigma$-closure of $\D_X$ coincides with the closed ball $\overline{B}_{(X^\sharp)^*}$; hence it is isometrically isomorphic to $\overline{B}_{\ell_\infty}$ as well.

\begin{prop}\label{pi:projection}
	Let $X$ be a Banach space with a normalized basis $(e_j)_{j=1}^{\infty}$. Then
	\begin{equation}\label{the_image_of_pi_projection}
	\overline{B}_{(X^\sharp)^*} = \{ z \in (X^\sharp)^* : |z(e_j^*)| \leq 1 \,\, \text{for every} \,\, j \in \N \} = \overline{\D}_X^{\sigma}.
	\end{equation}
\end{prop}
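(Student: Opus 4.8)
The plan is to prove the two equalities in \eqref{the_image_of_pi_projection} in turn, the first being a straightforward identification of $\overline{B}_{(X^\sharp)^*}$ via the isometry $\tau$, and the second requiring a density argument.

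\textbf{Identifying the ball.} First I would use Proposition \ref{ell_1:X^dagger}, which gives the isometric isomorphism $\tau : X^\sharp \to \ell_1$ with $\tau(x^*) = (x^*(e_j))_{j=1}^\infty$. Dualizing, $\tau^* : \ell_\infty \to (X^\sharp)^*$ is an isometric isomorphism, so $\overline{B}_{(X^\sharp)^*} = \tau^*(\overline{B}_{\ell_\infty})$. Now for $z \in (X^\sharp)^*$ one has $z \in \overline{B}_{(X^\sharp)^*}$ if and only if $(\tau^*)^{-1}(z) \in \overline{B}_{\ell_\infty}$, i.e. if and only if $\sup_j |\langle (\tau^*)^{-1}(z), \tau(e_j^*)\rangle| \leq 1$ (using that $(\tau(e_j^*))_j$ is the canonical basis of $\ell_1$, whose supremum over coordinates computes the $\ell_\infty$-norm of the preimage). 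But $\langle (\tau^*)^{-1}(z), \tau(e_j^*)\rangle = z(e_j^*)$, so this reads $|z(e_j^*)| \leq 1$ for every $j$, which is exactly the middle set in \eqref{the_image_of_pi_projection}. Here one must be a little careful that $\|w\|_{\ell_\infty} = \sup_j |w_j| = \sup_j |\langle w, \tau(e_j^*)\rangle|$ genuinely recovers the $\ell_\infty$-norm — this is clear since the $\tau(e_j^*)$ are the standard unit vectors of $\ell_1 = (c_0)^* $, hence $c_0 \subset \ell_\infty$ separates points and the bidual pairing specializes correctly.

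\textbf{The $\sigma$-closure.} For $\overline{\D}_X^\sigma$, the inclusion $\overline{\D}_X^\sigma \subset \overline{B}_{(X^\sharp)^*}$ is immediate: each $x \in \D_X$ satisfies $|x^*(x)| \leq \|x^*\|_{\D_X}$, so $\D_X$ sits inside the (weak-star closed, hence $\sigma$-closed) ball $\overline{B}_{(X^\sharp)^*}$. For the reverse inclusion I would invoke Remark \ref{relation_tau_iota'}(d), namely $\kappa = \tau^*|_{c_{00}}$, which shows $\tau^*(B_{c_{00}}) = \kappa(B_{c_{00}}) \subset \D_X$ (indeed $\kappa$ maps $(z_j) \mapsto \sum r_j z_j e_j$ with $r_j = 1$, landing in $\D_X$ precisely when $\sup_j |z_j| < 1$). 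Since $\tau^*$ is a $w(\ell_\infty,\ell_1)$-to-$\sigma$ homeomorphism between the balls, and $B_{c_{00}}$ is $w^*$-dense in $\overline{B}_{\ell_\infty}$ (a standard fact: finitely supported sequences in the open ball are weak-star dense in $\overline{B}_{\ell_\infty}$, by truncation and scaling), we get that $\tau^*(\overline{B}_{\ell_\infty}) = \overline{B}_{(X^\sharp)^*}$ is the $\sigma$-closure of $\tau^*(B_{c_{00}}) \subset \D_X$, whence $\overline{B}_{(X^\sharp)^*} \subset \overline{\D}_X^\sigma$. Combining the two inclusions gives the third equality.

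\textbf{Main obstacle.} The genuinely delicate point is the weak-star density of $B_{c_{00}}$ in $\overline{B}_{\ell_\infty}$ and the fact that $\tau^*$ transports this density correctly; everything else is bookkeeping with the isometries $\tau,\tau^*$ and the definitions. I would state the density of $B_{c_{00}}$ in $\overline{B}_{\ell_\infty}$ for the $w(\ell_\infty,\ell_1)$-topology explicitly (approximate $w = (w_j) \in \overline{B}_{\ell_\infty}$ by the truncations $(w_1,\dots,w_n,0,0,\dots)$, testing against an arbitrary $y \in \ell_1$ and using $\sum_{j>n}|y_j| \to 0$; a trivial scaling by $1-\varepsilon$ puts these into $B_{c_{00}}$), and then the continuity of $\tau^*$ for the relevant weak-star topologies does the rest. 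One should also remark that the set $\{z : |z(e_j^*)| \leq 1\ \forall j\}$ is $\sigma$-closed, which makes the "easy" inclusion $\overline{\D}_X^\sigma \subset \overline{B}_{(X^\sharp)^*}$ formally clean.
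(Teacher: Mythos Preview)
Your proof is correct and follows essentially the same strategy as the paper. The first equality and the inclusion $\overline{B}_{(X^\sharp)^*} \subset \overline{\D}_X^{\sigma}$ are handled identically in spirit: the paper writes down the explicit approximating sequence $\sum_{j=1}^{m}(1-\tfrac{1}{m})u_j e_j \in \D_X$ converging in $\sigma$ to $z=\tau^*(u)$, which is exactly your truncation-and-scaling argument transported through $\kappa = \tau^*\vert_{c_{00}}$.

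The one genuine difference is the reverse inclusion $\overline{\D}_X^{\sigma} \subset \overline{B}_{(X^\sharp)^*}$. You argue directly that $\D_X$ sits in the $\sigma$-closed unit ball, which is the cleanest route. The paper instead observes $\D_X \subset \pi(\M(\Ho^\infty(\D_X)))$, uses $w^*$-$\sigma$-continuity of $\pi$ and compactness of the spectrum to get $\overline{\D}_X^{\sigma} \subset \pi(\M(\Ho^\infty(\D_X))) = \overline{B}_{(X^\sharp)^*}$. Your argument is more elementary and self-contained; the paper's detour through the spectrum is unnecessary for this inclusion, though it does reconnect the result to the surjectivity of $\pi$ established in Remark~\ref{relation_through_tau}.
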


\begin{proof}
	The first equality in \eqref{the_image_of_pi_projection} is clear by definition of the isometry $\tau^*: \ell_\infty \rightarrow (X^\sharp)^*$.
	We claim that $\overline{B}_{(X^\sharp)^*} \subset \overline{\D}_X^{\sigma}$.
	Let $z \in \overline{B}_{(X^\sharp)^*}$ be given. Then there exists $u = (u_n) \in \overline{B}_{\ell_{\infty}}$ such that $\tau^* (u) = z$.
	Let $x^* \in X^\sharp$ be fixed. Then
	\[
	x^* \left( \sum_{j=1}^{m} \left(1-\frac{1}{m}\right) u_j e_j \right) = \sum_{j=1}^{m} \left(1-\frac{1}{m}\right) u_j x^* (e_j ) \rightarrow \sum_{j=1}^\infty u_j x^* (e_j) = z(x^*),
	\]
	as $m \rightarrow \infty$.
	This implies that the sequence $\big( \sum_{j=1}^{m} \left(1-\frac{1}{m}\right) u_j e_j \big)_{m=1}^\infty \subset \D_X$ converges in $\sigma$-topology to $z$.
	Thus, $\overline{B}_{(X^\sharp)^*} \subset \overline{\D}_X^{\sigma}$.
	
	Next, it is clear that $\D_X \subset \pi (\M (\Ho^\infty (\D_X)))$. As $\pi$ is $w^*$-$\sigma$-continuous and $\M(\Ho^\infty (\D_X))$ is compact, we get $	\overline{\D}_X^{\sigma} \subset \pi (\M (\Ho^\infty (\D_X)))$.
	Therefore, we obtain that $\overline{B}_{(X^\sharp)^*} \subset \overline{\D}_X^{\sigma} \subset \pi \left(\M (\Ho^\infty (\D_X) ) \right)= \overline{B}_{(X^\sharp)^*}$.
\end{proof}

The following result gives a description of the fibers of $\M (\Ho^\infty (\D_X))$ by revealing its relation with fibers defined from $\M (\Ho^\infty (B_{c_0}))$ in terms of the isometric isomorphism $\Phi$ between $\M (\Ho^\infty (\D_X) )$ and $\M (\Ho^\infty (B_{c_0}))$.

\begin{prop}\label{prop:fiber_representation}
Let $z \in \overline{\D}_X^{\sigma}$. Then we have
\begin{equation*}
\M_z (\Ho^\infty (\D_X)) = \Phi^{-1} \left( \M_{(\tau^{-1})^* (z)} (\Ho^\infty (B_{c_0})) \right) = \Phi^{-1} \left( \M_{(z(e_1^*), z(e_2^*), \ldots)} (\Ho^\infty (B_{c_0})) \right).
\end{equation*}
\end{prop}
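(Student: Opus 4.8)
The plan is to reduce the claim to the commuting diagram established in Remark~\ref{relation_through_tau}, which already records that $\pi = (\tau^* \vert_{\overline{B}_{\ell_\infty}}) \circ \pi_{c_0} \circ \Phi$ on $\M(\Ho^\infty(\D_X))$. The whole statement is then essentially a chase through this square, so the first thing I would do is unwind both sides of the asserted equality in terms of $\pi$ and $\pi_{c_0}$.

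First I would fix $z \in \overline{\D}_X^{\sigma} = \overline{B}_{(X^\sharp)^*}$ (the equality of these two sets is Proposition~\ref{pi:projection}) and set $u = (\tau^{-1})^*(z) \in \overline{B}_{\ell_\infty}$; by the first equality in \eqref{the_image_of_pi_projection} this is exactly the sequence $u = (z(e_1^*), z(e_2^*), \ldots)$, which immediately accounts for the second equality in the statement. Note $\tau^*(u) = (\tau^*\circ(\tau^{-1})^*)(z) = z$ since $\tau^{-1} = \iota^*$ and $(\iota^*)^* \circ \tau^* \vert$ — more cleanly, $\tau^*$ and $(\tau^{-1})^*=(\tau^*)^{-1}$ are mutually inverse isometries $\ell_\infty \to (X^\sharp)^*$, so $z \leftrightarrow u$ is a bijective correspondence. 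Then for $\phi \in \M(\Ho^\infty(\D_X))$, Remark~\ref{relation_through_tau} gives $\pi(\phi) = \tau^*(\pi_{c_0}(\Phi(\phi)))$, and since $\tau^*$ is a bijection this says $\pi(\phi) = z \iff \pi_{c_0}(\Phi(\phi)) = (\tau^*)^{-1}(z) = u$. The left-hand side is $\phi \in \M_z(\Ho^\infty(\D_X))$ and the right-hand side is $\Phi(\phi) \in \M_u(\Ho^\infty(B_{c_0}))$, i.e.\ $\phi \in \Phi^{-1}(\M_u(\Ho^\infty(B_{c_0})))$. Since $\Phi$ is a bijection from $\M(\Ho^\infty(\D_X))$ onto $\M(\Ho^\infty(B_{c_0}))$, this equivalence of membership conditions is exactly the claimed set equality $\M_z(\Ho^\infty(\D_X)) = \Phi^{-1}(\M_{(\tau^{-1})^*(z)}(\Ho^\infty(B_{c_0})))$.

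There is no serious obstacle; the one point that deserves a line of care is the bookkeeping identity $\tau^* \circ \pi_{c_0} \circ \Phi = \pi$ versus its inverted form, i.e.\ making sure $(\tau^{-1})^*$ is genuinely $(\tau^*)^{-1}$ and that the fiber $\M_u(\Ho^\infty(B_{c_0}))$ on the right is indexed by a point of $\overline{B}_{\ell_\infty}$ (which it is, since $z \in \overline{B}_{(X^\sharp)^*}$ forces $u \in \overline{B}_{\ell_\infty}$). Everything else — that $\pi_{c_0}$ is well-defined and surjective onto $\overline{B}_{\ell_\infty}$, that $\Phi$ is a homeomorphism, that $\pi = \tau^* \circ \pi_{c_0} \circ \Phi$ — is quoted verbatim from the material preceding the statement, so the proof is a two-sentence diagram chase once the correspondence $z \leftrightarrow (z(e_j^*))_j$ is made explicit.
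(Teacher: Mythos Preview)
Your proposal is correct and follows essentially the same route as the paper: both arguments rest on the identity $\pi(\phi)(\cdot) = \pi_{c_0}(\Phi(\phi))(\tau(\cdot))$ from Remark~\ref{relation_through_tau}. The paper verifies one inclusion by fixing $\phi \in \M_z(\Ho^\infty(\D_X))$ and computing $\Phi(\phi)(y)$ for $y \in \ell_1$ directly, whereas you package the same computation as the commuting square $\pi = \tau^* \circ \pi_{c_0} \circ \Phi$ and invoke bijectivity of $\tau^*$ to get both inclusions at once; your version is in fact slightly more complete, since the paper leaves the reverse inclusion and the identification $(\tau^{-1})^*(z) = (z(e_j^*))_j$ implicit.
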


\begin{proof}
Let $\phi \in \M_z (\Ho^\infty (\D_X))$. For $y \in \ell_1$, choose $x^* \in X^\sharp$ so that $\tau (x^*) = y$. Observe that
\begin{align*}
\Phi (\phi) (y) = \Phi (\phi) ( \tau (x^*)) &= \phi (x^*) \quad (\text{by Remark \ref{relation_through_tau}}) \\
&= (\tau^{-1})^* (z) (\tau (x^*)) = (\tau^{-1})^* (z) (y);
\end{align*}
thus, $\Phi (\phi) \in \M_{(\tau^{-1})^* (z)} (\Ho^\infty (B_{c_0}))$. 
\end{proof}

Let us remark that the size of fibers over points of $\overline{\D}_X^{\sigma}$ can be quite big as in the case for $\Ho^\infty (B_X)$. As a matter of fact, it is well known that the fiber $\M_u (\Ho^\infty (B_{c_0}))$ contains a copy of $\beta \N \setminus \N$ for every $u \in \overline{B}_{\ell_{\infty}}$ where $\beta \N$ is the Stone-\v{C}ech compactification of $\N$ \cite[Theorem 11.1]{ACG91}. So, one of direct consequences of Proposition \ref{prop:fiber_representation} is the following.

\begin{prop}
For any $z \in \overline{\D}_X^{\sigma}$, the set $\beta \N \setminus \N$ can be injected into the fiber $\M_z (\Ho^\infty (\D_X))$.
\end{prop}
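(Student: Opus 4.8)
The plan is to deduce this immediately from Proposition \ref{prop:fiber_representation} together with the known fact that every fiber of $\M(\Ho^\infty(B_{c_0}))$ contains a copy of $\beta\N\setminus\N$. First I would fix $z \in \overline{\D}_X^{\sigma}$ and set $u := (\tau^{-1})^*(z)$. Since $\tau : X^\sharp \to \ell_1$ is an isometric isomorphism (Proposition \ref{ell_1:X^dagger}), the adjoint $(\tau^{-1})^* : (X^\sharp)^* \to \ell_\infty$ is an isometric isomorphism; and by Proposition \ref{pi:projection} we have $\overline{\D}_X^{\sigma} = \overline{B}_{(X^\sharp)^*}$, so $\|u\|_{\ell_\infty} = \|z\| \leq 1$, i.e. $u \in \overline{B}_{\ell_\infty}$. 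This is the only point that requires a word of justification, and it is exactly what is needed so that the result quoted below applies.

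Next I would invoke \cite[Theorem 11.1]{ACG91}, which provides a homeomorphic embedding $j : \beta\N\setminus\N \hookrightarrow \M_u(\Ho^\infty(B_{c_0}))$. By Proposition \ref{prop:fiber_representation} we have $\M_z(\Ho^\infty(\D_X)) = \Phi^{-1}\big(\M_{(\tau^{-1})^*(z)}(\Ho^\infty(B_{c_0}))\big) = \Phi^{-1}\big(\M_u(\Ho^\infty(B_{c_0}))\big)$, and since $\Phi$ is a homeomorphism from $\M(\Ho^\infty(\D_X))$ onto $\M(\Ho^\infty(B_{c_0}))$ (as recorded after Theorem \ref{Hol:D(X):B_{c_0}}), the composition $\Phi^{-1}\circ j : \beta\N\setminus\N \to \M_z(\Ho^\infty(\D_X))$ is again a homeomorphic embedding, in particular an injection. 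This yields the claim.

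Since the statement is really a corollary of Proposition \ref{prop:fiber_representation}, there is no substantial obstacle to overcome; the only care needed is the bookkeeping identifying $(\tau^{-1})^*(z)$ as an element of $\overline{B}_{\ell_\infty}$, handled above via Proposition \ref{pi:projection}. One could optionally remark that, because $\Phi$ is moreover an isometry in the Gleason sense (again see the discussion after Theorem \ref{Hol:D(X):B_{c_0}} and Section \ref{sec:Gleason}), the embedded copy of $\beta\N\setminus\N$ in fact lies inside a single Gleason part of $\M(\Ho^\infty(\D_X))$, but this refinement is not needed for the present statement.
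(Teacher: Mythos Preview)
Your proof is correct and is exactly the paper's approach: the paper presents this proposition as an immediate consequence of Proposition \ref{prop:fiber_representation} together with \cite[Theorem 11.1]{ACG91}, and you have simply written out that deduction in full (with the extra care of verifying $(\tau^{-1})^*(z)\in\overline{B}_{\ell_\infty}$ via Proposition \ref{pi:projection}).

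One caveat on your closing optional remark: it is not true that the embedded copy of $\beta\N\setminus\N$ lies in a single Gleason part. The fact that $\Phi$ is a Gleason isometry only transports the Gleason-part structure from $\M(\Ho^\infty(B_{c_0}))$ to $\M(\Ho^\infty(\D_X))$; it does not collapse anything, and in $\M(\Ho^\infty(B_{c_0}))$ distinct points of the embedded $\beta\N\setminus\N$ lie in \emph{distinct} Gleason parts (see \cite[Corollary 3.12]{ADLM} and the corresponding proposition in Section \ref{sec:Gleason}). Since you explicitly flag this as unnecessary for the present statement, it does not affect the validity of your proof.
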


Moreover, it is recently proved in \cite{CFGJM} and in \cite{DS} independently that for each $u \in \overline{B}_{\ell_\infty}$, there exists a Gleason isometric analytic injection of $B_{\ell_\infty}$ into the fiber $\M_u (\Ho^\infty (B_{c_0}))$ (see \cite{ADLM} for the definition of Gleason metric). Combining this result with Proposition \ref{prop:fiber_representation}, we can notice the following result.

\begin{prop}\label{cor:injection_into_the_fiber_2}
For any $z \in \overline{\D}_X^{\sigma}$, there exists a Gleason isometric analytic injection of $B_{\ell_{\infty}}$ into the fiber $\M_z (\Ho^\infty (\D_X))$.
\end{prop}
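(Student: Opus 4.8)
The plan is to obtain $\Xi$ by transporting, through the homeomorphism $\Phi$, the Gleason isometric analytic injection into $\M(\Ho^\infty(B_{c_0}))$ furnished by \cite{CFGJM, DS}. Concretely, given $z\in\overline{\D}_X^{\sigma}$, set $u:=(\tau^{-1})^*(z)=(z(e_1^*),z(e_2^*),\ldots)$, which lies in $\overline{B}_{\ell_\infty}$ since, by Proposition~\ref{ell_1:X^dagger} and Proposition~\ref{pi:projection}, $(\tau^{-1})^*$ carries $\overline{\D}_X^{\sigma}=\overline{B}_{(X^\sharp)^*}$ isometrically onto $\overline{B}_{\ell_\infty}$. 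By \cite{CFGJM} (or \cite{DS}) there is a Gleason isometric analytic injection $\Xi_0\colon B_{\ell_\infty}\to\M_u(\Ho^\infty(B_{c_0}))$. I would then define $\Xi:=\Phi^{-1}\circ\Xi_0\colon B_{\ell_\infty}\to\M(\Ho^\infty(\D_X))$ and verify the three required properties.

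First, $\Xi$ lands in the correct fiber and is injective: by Proposition~\ref{prop:fiber_representation}, $\M_z(\Ho^\infty(\D_X))=\Phi^{-1}\big(\M_u(\Ho^\infty(B_{c_0}))\big)$, so $\Xi(w)\in\M_z(\Ho^\infty(\D_X))$ for every $w\in B_{\ell_\infty}$; injectivity is immediate because $\Phi^{-1}$ is a bijection and $\Xi_0$ is injective. Second, analyticity: since $\Phi$ is the restriction to $\M(\Ho^\infty(\D_X))$ of the transpose of $\Psi$, we have $\Phi(\phi)=\phi\circ\Psi$ and hence $\Phi^{-1}(\psi)=\psi\circ\Psi^{-1}$; therefore, for each $g\in\Ho^\infty(\D_X)$ and $w\in B_{\ell_\infty}$,
\[
\widehat{g}(\Xi(w))=\Xi(w)(g)=\Xi_0(w)\big(\Psi^{-1}(g)\big)=\widehat{\Psi^{-1}(g)}\big(\Xi_0(w)\big),
\]
which is holomorphic in $w$ because $\Xi_0$ is analytic and $\Psi^{-1}(g)\in\Ho^\infty(B_{c_0})$. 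As this holds for every $g$, the map $\Xi$ is analytic.

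Third, the Gleason isometry: as recorded right after the definition of $\Phi$ (and to be justified in Section~\ref{sec:Gleason}), $\Phi$ is an isometry for the Gleason metric, being conjugation by the isometric algebra isomorphism $\Psi$; hence so is $\Phi^{-1}$. Composing the Gleason isometry $\Phi^{-1}$ with the Gleason isometry $\Xi_0$ again yields a Gleason isometry, so $\Xi$ is a Gleason isometric analytic injection of $B_{\ell_\infty}$ into $\M_z(\Ho^\infty(\D_X))$.

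The only point that needs care is the compatibility of $\Phi$ with both the analytic structure on the spectrum and the Gleason metric, but each of these follows formally from the fact that $\Phi$ is induced by the isometric isomorphism $\Psi$ (and the latter compatibility is precisely the content referenced in Section~\ref{sec:Gleason}); so there is no substantial obstacle, and the statement comes out as a direct corollary of Proposition~\ref{prop:fiber_representation} together with the cited results of \cite{CFGJM} and \cite{DS}.
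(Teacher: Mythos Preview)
Your proof is correct and follows exactly the route the paper takes: the paper states this proposition as an immediate consequence of the cited results in \cite{CFGJM, DS} combined with Proposition~\ref{prop:fiber_representation}, without writing out a proof. You have simply supplied the natural details (that $\Phi^{-1}$ preserves fibers, analyticity, and the Gleason metric because it is induced by the isometric algebra isomorphism $\Psi$), and these are precisely the verifications implicit in the paper's one-line justification.
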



We finish this section by proving the Cluster Value Theorem for $\Ho^\infty (\D_X)$. To this end, we first need to define the cluster set for $f \in \Ho^\infty (\D_X)$ and $z \in \overline{\D}_X^{\sigma}$.

\begin{definition}
Let $X$ be a Banach space with a normalized basis. For each $z \in \overline{\D}_X^{\sigma}$ and $f \in \Ho^\infty (\D_X)$, the \emph{cluster set $Cl_{\D_X} (f, z)$ of $f$ at $z$} is defined as
\[
\{ \lambda \in \C : \, \text{there exists a net} \,\, (x_{\alpha}) \subset \D_X \,\, \text{such that} \,\, x_{\alpha} \xrightarrow{\,\sigma\,} z \,\, \text{and} \,\, f(x_\alpha) \rightarrow \lambda \}.
\]
\end{definition}

Notice that the following inclusion always holds:
\begin{equation}\label{CVT_inclusion_for_D_X}
Cl_{\D_X} (f,z) \subset \widehat{f} ( \M_{z} (\Ho^\infty (\D_X)) )
\end{equation}
for every $f \in \Ho^\infty (\D_X)$ and $z \in \overline{\D}_X^{\sigma}$. Indeed, if $\lambda \in Cl_{\D_X} (f, z)$, there exists a net $(x_\alpha) \subset \D_X$ such that $x_\alpha \xrightarrow{\,\sigma\,} z$ and $f(x_\alpha) \rightarrow \lambda$. Passing to a subnet, if necessary, we may assume that $\delta_{x_\alpha} \rightarrow \phi$ in $\M (\Ho^\infty (\D_X))$. Thus, $\widehat{f} (\phi) = \phi (f) = \lambda$. For $x^* \in X^\sharp$, note that $\phi (x^*) = \lim_\alpha \delta_{x_\alpha} (x^*) = z(x^*)$, which implies that $\pi (\phi) = z$.

\begin{definition}\label{def:CVT_D_X}
Let $X$ be a Banach space with a normalized basis. We say that the \emph{Cluster Value Theorem holds for $\Ho^\infty (\D_X)$ when both sets in \eqref{CVT_inclusion_for_D_X} coincide for every $f \in \Ho^\infty (\D_X)$ and $z \in \overline{\D}_X^{\sigma}$}.
\end{definition}

The following result, which points out the relation between cluster sets from $\Ho^\infty (B_{c_0})$ and the ones from $\Ho^\infty (\D_X)$, will be a key ingredient for our purpose.

\begin{prop}\label{prop:same_cluster_sets}
Let $X$ be a Banach space with a normalized basis. If $f \in \Ho^\infty (B_{c_0})$ and $u \in \overline{B}_{\ell_\infty}$, then $Cl_{B_{c_0}} (f, u) = Cl_{\D_X} (\Psi(f), \tau^*(u))$.
\end{prop}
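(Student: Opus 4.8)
The plan is to prove the two inclusions separately, exploiting the relationship $\Psi(f)(x) = f(\iota(x))$ for $x \in \D_X$ together with the behaviour of $\iota$ and $\kappa$ on nets. For the inclusion $Cl_{B_{c_0}}(f,u) \subset Cl_{\D_X}(\Psi(f), \tau^*(u))$, suppose $\lambda \in Cl_{B_{c_0}}(f,u)$, witnessed by a net $(z_\alpha) \subset B_{c_0}$ with $z_\alpha \xrightarrow{w^*} u$ and $f(z_\alpha) \to \lambda$. The issue is that $\kappa$ is only defined on $c_{00}$, so one cannot directly push $(z_\alpha)$ into $X$; instead I would approximate. A cleaner route: since $B_{c_{00}}$ is dense in $B_{c_0}$ and $f$ is continuous, for each $\alpha$ pick $w_\alpha \in B_{c_{00}}$ with $|f(w_\alpha) - f(z_\alpha)|$ small and $\|w_\alpha - z_\alpha\|_\infty$ small (so that $w_\alpha \xrightarrow{w^*} u$ still along a suitable refinement of the net); then set $x_\alpha = \kappa(w_\alpha) \in \D_X$. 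Using Remark \ref{relation_tau_iota'}(a), $\iota(x_\alpha) = w_\alpha$, hence $\Psi(f)(x_\alpha) = f(w_\alpha) \to \lambda$. It remains to check $x_\alpha \xrightarrow{\sigma} \tau^*(u)$: for $x^* \in X^\sharp$, writing $y = \tau(x^*) \in \ell_1$, we have $x^*(x_\alpha) = \langle \iota(x_\alpha), y\rangle = \langle w_\alpha, y\rangle \to \langle u, y\rangle = (\tau^* u)(x^*)$, where the convergence is the $w(c_0,\ell_1)$-convergence of $w_\alpha$ to $u$. So $\lambda \in Cl_{\D_X}(\Psi(f), \tau^*(u))$.

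For the reverse inclusion $Cl_{\D_X}(\Psi(f), \tau^*(u)) \subset Cl_{B_{c_0}}(f,u)$, take $\lambda \in Cl_{\D_X}(\Psi(f), \tau^*(u))$, witnessed by a net $(x_\alpha) \subset \D_X$ with $x_\alpha \xrightarrow{\sigma} \tau^*(u)$ and $\Psi(f)(x_\alpha) = f(\iota(x_\alpha)) \to \lambda$. Set $z_\alpha = \iota(x_\alpha) \in B_{c_0}$. Then $f(z_\alpha) \to \lambda$ directly, so I only need $z_\alpha \xrightarrow{w^*} u$ in $c_0$, i.e. $\langle z_\alpha, y \rangle \to \langle u, y\rangle$ for every $y \in \ell_1$. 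Given $y \in \ell_1$, Remark \ref{relation_tau_iota'}(c) gives $x^* := \iota^*(y) = \tau^{-1}(y) \in X^\sharp$ with $\tau(x^*) = y$; then $\langle z_\alpha, y\rangle = \langle \iota(x_\alpha), \tau(x^*)\rangle = x^*(x_\alpha) \to (\tau^* u)(x^*) = \langle u, \tau(x^*)\rangle = \langle u, y\rangle$, using $x_\alpha \xrightarrow{\sigma} \tau^*(u)$. This gives $\lambda \in Cl_{B_{c_0}}(f,u)$ and completes the proof.

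The main obstacle is the first inclusion: because $\kappa$ is not continuous and is only defined on $c_{00}$, one cannot transport the witnessing net directly, and the approximation step must be carried out with enough care that the approximants $w_\alpha$ still converge weak-star to $u$. This is handled by indexing over a refined net — formally, one works with the directed set of pairs $(\alpha, \varepsilon)$ or passes to the net of finite truncations $P_n$ — using that on $B_{c_0}$, weak-star convergence is just coordinatewise convergence (since $c_{00}$-truncations of $\ell_1$ norm-approximate), so replacing $z_\alpha$ by a sufficiently good finitely-supported approximant preserves the limit. Once this technical point is set up, both inclusions reduce to the identities $\iota \circ \kappa = \mathrm{Id}_{c_{00}}$ and $\tau = (\Psi|_{\ell_1})^{-1}$, $\iota^* = \tau^{-1}$ from Remark \ref{relation_tau_iota'}.
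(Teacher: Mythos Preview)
Your argument is correct, and for the forward inclusion it follows the same idea as the paper: approximate the witnessing net by finitely supported elements and push through $\kappa$. The paper streamlines this step by invoking the separability of $\ell_1$ to replace the net by a sequence $(x_n)\subset B_{c_0}$, which makes the approximation trivial (pick $w_n\in B_{c_{00}}$ with $\|x_n-w_n\|\to 0$); your refined-net construction works but is heavier.

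The genuine difference is in the reverse inclusion. The paper does \emph{not} argue directly: given $\lambda\in Cl_{\D_X}(\Psi(f),\tau^*(u))$, it uses the inclusion \eqref{CVT_inclusion_for_D_X} to produce $\phi\in\M_{\tau^*(u)}(\Ho^\infty(\D_X))$ with $\phi(\Psi(f))=\lambda$, transfers via $\Phi$ to get $\Phi(\phi)\in\M_u(\Ho^\infty(B_{c_0}))$, and then invokes the Cluster Value Theorem for $\Ho^\infty(B_{c_0})$ from \cite{ACGLM} to conclude $\lambda\in Cl_{B_{c_0}}(f,u)$. Your route is more elementary: you simply push the witnessing net $(x_\alpha)\subset\D_X$ through $\iota$ and check, using $\iota^*=\tau^{-1}$, that $\iota(x_\alpha)\xrightarrow{w^*}u$ coordinatewise. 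This avoids any appeal to the Cluster Value Theorem for $c_0$ and shows that the equality of cluster sets is a purely formal consequence of the isometric isomorphism $\Psi$ and the identities in Remark~\ref{relation_tau_iota'}. The paper's detour through the spectrum is unnecessary here, though harmless since the $c_0$ Cluster Value Theorem is available and is used again immediately afterward.
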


\begin{proof}
Let $\lambda \in Cl_{B_{c_0}} (f, u)$ be given. By the separability of $\ell_1$, we may choose a sequence $(x_n) \subset B_{c_0}$ such that $x_n \xrightarrow{w(\ell_\infty, \,\,\ell_1)} u$ and $f(x_n) \rightarrow \lambda$. Taking $w_n \in B_{c_{00}}$ for each $n \in \N$ so that $\|x_n - w_n \| \rightarrow 0$ and $|f(x_n)-f(w_n)| \rightarrow 0$ as $n \rightarrow \infty$, we have that $w_n \xrightarrow{w(\ell_\infty, \,\,\ell_1)} u$ and $f(w_n) \rightarrow \lambda$. By Remark \ref{relation_tau_iota'} and the continuity in the $w(\ell_\infty,\ell_1)$-$\sigma$ topology of the mapping $\tau^*$,
\[
\kappa (w_n) = \tau^* (w_n) \xrightarrow{\,\sigma\,} \tau^* (u) \,\text{ and }\, \Psi (f) (\kappa (w_n) ) = f ( \iota (\kappa (w_n))) = f(w_n) \rightarrow \lambda.
\]
It follows that $\lambda \in Cl_{\D_X} (\Psi(f), \tau^*(u))$; thus $Cl_{B_{c_0}} (f, u) \subset Cl_{\D_X} (\Psi(f), \tau^*(u))$.

Conversely, let $\lambda \in Cl_{\D_X} (\Psi(f), \tau^*(u))$. By \eqref{CVT_inclusion_for_D_X}, there exists $\phi \in \M_{\tau^* (u)} (\Ho^\infty (\D_X))$ such that $\lambda = \phi (\Psi (f))$. Note that $\lambda = \Phi (\phi ) (f)$ and from Proposition \ref{prop:fiber_representation} that
\[
\Phi (\phi) \in \Phi \left( \M_{\tau^* (u)} (\Ho^\infty (\D_X)) \right) = \M_u (\Ho^\infty ( B_{c_0})).
\]
This implies that $\lambda \in \widehat{f} ( \M_u (\Ho^\infty ( B_{c_0})) )$. As the Cluster Value Theorem holds for $\Ho^\infty (B_{c_0})$ \cite[Theorem 5.1]{ACGLM}, we obtain that $\lambda \in Cl_{B_{c_0}} (f, u)$. Hence, we conclude that $Cl_{\D_X} (\Psi(f), \tau^*(u)) \subset Cl_{B_{c_0}} (f, u)$.
\end{proof}

Note that Proposition \ref{prop:same_cluster_sets} and \cite[Lemma 2.1]{ACGLM} imply that each cluster set $Cl_{\D_X} (f, z)$ is a compact connected set. Now, we are ready to prove the Cluster Value Theorem for $\Ho^\infty (\D_X)$ provided that a Banach space $X$ has a normalized basis.

\begin{theorem}\label{thm:CVT_}
Let $X$ be a Banach space with a normalized basis. Then Cluster Value Theorem holds for $\Ho^\infty (\D_X)$.
\end{theorem}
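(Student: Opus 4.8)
The plan is to transfer the Cluster Value Theorem for $\Ho^\infty (B_{c_0})$, established in \cite[Theorem 5.1]{ACGLM}, to $\Ho^\infty (\D_X)$ by means of the isometric isomorphism $\Psi$, the induced homeomorphism $\Phi$ of spectra, and the identification $\tau^*$ of base points, using the dictionary between cluster sets and fibers assembled in Propositions \ref{prop:fiber_representation} and \ref{prop:same_cluster_sets}. Concretely, I would fix $z \in \overline{\D}_X^{\sigma}$ and $g \in \Ho^\infty (\D_X)$, and use Proposition \ref{pi:projection} together with the fact that $\tau^* : \overline{B}_{\ell_\infty} \to \overline{B}_{(X^\sharp)^*} = \overline{\D}_X^{\sigma}$ is an onto isometry to write $z = \tau^*(u)$ with $u = (\tau^{-1})^*(z) \in \overline{B}_{\ell_\infty}$; I also set $f = \Psi^{-1}(g) \in \Ho^\infty (B_{c_0})$, so $g = \Psi(f)$.

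Since the inclusion $Cl_{\D_X}(g,z) \subset \widehat{g}(\M_z(\Ho^\infty (\D_X)))$ is already \eqref{CVT_inclusion_for_D_X}, it remains to prove the reverse inclusion. I would take $\lambda \in \widehat{g}(\M_z(\Ho^\infty (\D_X)))$, say $\lambda = \phi(g)$ with $\phi \in \M_z(\Ho^\infty (\D_X))$. By Proposition \ref{prop:fiber_representation}, $\Phi(\phi) \in \M_u(\Ho^\infty (B_{c_0}))$, and since $\Phi(\phi)(f) = \phi(\Psi(f)) = \phi(g) = \lambda$ by definition of $\Phi$, we get $\lambda \in \widehat{f}(\M_u(\Ho^\infty (B_{c_0})))$. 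Applying the Cluster Value Theorem for $\Ho^\infty (B_{c_0})$ yields $\lambda \in Cl_{B_{c_0}}(f,u)$, and then Proposition \ref{prop:same_cluster_sets} gives $Cl_{B_{c_0}}(f,u) = Cl_{\D_X}(\Psi(f),\tau^*(u)) = Cl_{\D_X}(g,z)$. Hence $\lambda \in Cl_{\D_X}(g,z)$, which completes the argument.

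Since every step is a direct appeal to a previously established result, there is no genuine obstacle left at this stage: the substance has been front-loaded into the structural statements of Sections \ref{sec:fiber} and \ref{sec:cluster}. The only points requiring mild care are bookkeeping ones: identifying $(\tau^{-1})^*$ with $(\tau^*)^{-1}$, which is legitimate because $\tau$ is an isometric isomorphism, and noting that the range $\overline{\D}_X^{\sigma}$ of $z$ in Definition \ref{def:CVT_D_X} coincides with the range $\overline{B}_{(X^\sharp)^*}$ of $z$ in Definition \ref{def:fibers}, which is precisely Proposition \ref{pi:projection}. One may alternatively phrase the whole proof as the observation that the triple $(\Psi^{-1},\Phi,\tau^*)$ intertwines the cluster-value diagrams for $\D_X$ and for $B_{c_0}$, so that the theorem for $\Ho^\infty (\D_X)$ is equivalent to the one for $\Ho^\infty (B_{c_0})$.
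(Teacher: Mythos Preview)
Your proposal is correct and follows essentially the same route as the paper's proof: both reduce the reverse inclusion to the Cluster Value Theorem for $\Ho^\infty(B_{c_0})$ via Proposition~\ref{prop:fiber_representation} to transfer fibers through $\Phi$, and Proposition~\ref{prop:same_cluster_sets} to transfer cluster sets through $\Psi$ and $\tau^*$. The only difference is cosmetic (you name the function in $\Ho^\infty(\D_X)$ as $g$ and set $f=\Psi^{-1}(g)$, while the paper keeps $f$ for the $\D_X$-side), and your explicit remarks on $(\tau^{-1})^*=(\tau^*)^{-1}$ and on Proposition~\ref{pi:projection} are harmless clarifications of points the paper leaves implicit.
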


\begin{proof} Let $z \in \overline{\D}_X^{\sigma}$, $f \in \Ho^\infty (\D_X)$ and $\phi \in \M_z (\Ho^\infty (\D_X))$ be given.
Thanks to the inclusion \eqref{CVT_inclusion_for_D_X}, it suffices to prove $\lambda := \phi (f) \in Cl_{\D_X} (f,z)$. Note from Proposition \ref{prop:fiber_representation} that $\Phi (\phi) \in \M_{(\tau^{-1})^* (z)} (\Ho^\infty (B_{c_0}))$. From the Cluster Value Theorem for $\Ho^\infty (B_{c_0})$ at every $u \in \overline{B}_{\ell_{\infty}}$,
we get that
\begin{align*}
\lambda = \phi(f) = \Phi (\phi) ( \Psi^{-1} (f) ) \in Cl_{B_{c_0}} ( \Psi^{-1} (f), (\tau^{-1})^* (z)).
\end{align*}
By Proposition \ref{prop:same_cluster_sets}, we have that
\[
Cl_{B_{c_0}} ( \Psi^{-1} (f), (\tau^{-1})^* (z)) = Cl_{\D_X} (f, z).
\]
Thus, $\lambda \in Cl_{\D_X} (f,z)$ and it completes the proof.
\end{proof}

		\noindent \textbf{Acknowledgment:\ } We would like to thank Daniel Carando and Veronica Dimant for valuable comments and fruitful conversations. We are also grateful to anonymous referees for several helpful questions and suggestions.

	\end{document}